\newcommand{\C}{\ensuremath{\mathcal{C}}}
\newcommand{\sym}{\ensuremath{\mathrm{Sym}}}
\newcommand{\minus}{\ensuremath{\mathrm{Neg}}}
\newcommand{\diam}{\ensuremath{\mathrm{Diam} \; }}
\newcommand{\tbn}{\ensuremath{W(\tilde B_n)}}
\newcommand{\tcn}{\ensuremath{W(\tilde C_n)}}
\newcommand{\tdn}{\ensuremath{W(\tilde D_n)}}
\newcommand{\barb}{\ensuremath{\bar W(\tilde B_n)}}
 \newcommand{\graph}{\ensuremath{\C(G,X)}}
\newcommand{\ep}{\varepsilon}
\newcommand{\zz}{\mathbb{Z}}
\newcommand{\rr}{\mathbb{R}}
\newcommand{\bu}{\mathbf{u}}
\newcommand{\bv}{\mathbf{v}}
\newcommand{\bw}{\mathbf{w}}
\newcommand{\bo}{\mathbf{0}}
\newtheorem{thm}{Theorem}[section]
\newtheorem{lemma}[thm]{Lemma}
\newtheorem{prop}[thm]{Proposition}
\theoremstyle{definition}
\newtheorem{notation}[thm]{Notation}
\newtheorem{defn}[thm]{Definition}
\newcommand{\coverpage}[3]{\thispagestyle{empty}
	\addtocounter{page}{-1}
	\null\vspace*{-1cm} \hfill\includegraphics[scale=1]{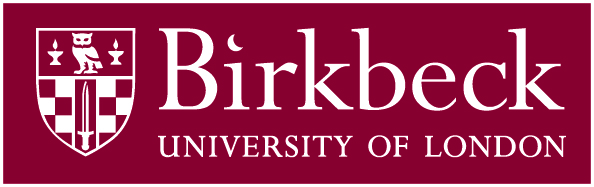} \vskip 2in
	\begin{center} \begin{minipage}{0.7\textwidth}\begin{center}\Huge\bf{#1}\end{center} \end{minipage}\end{center}  \vfill
	\begin{center} {\large By}\bigskip\\ {\large #2}\\ \end{center} \vfill 
	\framebox{\begin{minipage}{\textwidth}
			Birkbeck Mathematical Sciences Preprint Series\hfill
			Preprint Number #3 \\ \\
			\null\hfill www.bbk.ac.uk/ems/research/pure/preprints 
	\end{minipage}}
	\newpage}
\begin{document}
	
		%\coverpage has three arguments, you type \coverpage{title}{author}{preprintnumber}
	\coverpage{Commuting Involution Graphs in Classical Affine Weyl Groups}{Sarah Hart and Amal Sbeiti Clarke}{42}
	
\title{Commuting Involution Graphs in Classical Affine Weyl Groups}
\author{Sarah Hart and Amal Sbeiti Clarke}
\date{}
\maketitle

\begin{abstract}
In this paper we investigate commuting involution graphs in classical affine Weyl groups. Let $W$ be a classical Weyl group of rank $n$, with $\tilde W$ its corresponding affine Weyl group. Our main result is that if $X$ is a conjugacy class of involutions in $\tilde W$, then the commuting involution graph $\C(\tilde W, X)$ is either disconnected or has diameter at most $n+2$. This bound is known to hold for types $\tilde A_n$ and $\tilde C_n$, so the main work of this paper is to prove the theorem for types $\tilde B_n$ and $\tilde D_n$. 
\end{abstract}

 \section{Introduction}

Let $G$ be a group, and $X$ a subset of $G$. The {\em commuting graph} \graph\ is the graph with vertex set $X$, with an edge joining vertices $x$ and $y$ whenever $x$ and $y$ commute in $G$. If $X$ is a set of involutions, then we call \graph\ a {\em commuting involution graph}. Such graphs have been studied in a wide variety of groups, most often in the cases where $X$ is a conjugacy class of involutions, or the set of all involutions, of $G$. A well-known example of the use of commuting involution graphs is Fischer's work on 3-transposition groups, where in that case $X$ was a conjugacy class of involutions the product of any pair of which had order at most 3.  An early use of commuting graphs, in the case where $X$ is the set of all non-identity elements of $G$, was in Brauer and Fowler's paper on groups of even order \cite{brauerfowler}, where they showed that if $G$ is a group of even order with more than one conjugacy class of involutions, then any two involutions are distance at most 3 apart in \graph. \\

The motivation for the present article is a series of papers by Bates et al., looking at commuting involution graphs for various groups $G$, where $X$ is an involution conjugacy class (\cite{symmetric}, \cite{Finite}, \cite{linear}, \cite{sporadic}). In particular, they showed in \cite{symmetric} that for the symmetric group, \graph\ is either disconnected or has diameter at most 4. Then in \cite{Finite}, they extended this to show that if $G$ is any finite Coxeter group, then \graph\ is either disconnected or has diameter at most 5. Later, Perkins looked at affine groups of type $\tilde A_n$, where it turns out that when \graph\ is connected it has diameter at most 6. Most recently, Hart and Sbeiti Clarke \cite{amal}, considered the case of $\tilde C_n$. Here, \graph, where connected, has diameter at most $n+2$. It is the purpose of this paper to complete the analysis of classical affine groups by dealing with groups of type $\tilde B_n$ and $\tilde D_n$. Our main result is as follows.

\begin{thm} \label{thmclassicalaffine}
Let $W$ be a classical Weyl group of rank $n$ and $\tilde W$ its corresponding affine group, with $X$ a conjugacy class of involutions of $\tilde W$. Then either $\C(\tilde W, X)$ is disconnected, or it is connected with diameter at most $n+2$. 
\end{thm}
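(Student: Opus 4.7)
The plan is to exploit the standard realization $\tilde W \cong W \ltimes L$, where $W$ is the finite Weyl group (signed permutations, or those with an even number of sign changes for $D_n$) and $L$ is the appropriate translation lattice. Under this identification every element is a pair $(w,v)$, and $(w,v)$ is an involution precisely when $w^2 = 1$ and $w(v) = -v$. Because the cases $\tilde A_n$ and $\tilde C_n$ are already settled in the literature, I would concentrate on $\tilde B_n$ and $\tilde D_n$, treating each type by a parallel argument modelled on the $\tilde C_n$ analysis of Hart and Sbeiti Clarke.

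My first step would be to classify the conjugacy classes of involutions in $\tilde B_n$ and $\tilde D_n$. Conjugation by $(u,0)$ sends $(w,v)$ to $(uwu^{-1}, u(v))$, while conjugation by $(1,\mu)$ changes the translation by $\mu - w(\mu)$; hence each class is labelled by the signed cycle type of $w$ together with the residue of $v$ modulo $(1-w)L$ inside the $(-1)$-eigenspace of $w$. I expect the resulting list to be a refinement of the $\tilde C_n$ parameterisation, with extra bookkeeping in $\tilde D_n$ to account for the even-sign-change and sublattice constraints.

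Next, for each class $X$ I would fix a convenient base involution $t \in X$ of minimal support and show that any $x \in X$ can be joined to $t$ by a path $x = x_0, x_1, \ldots, x_k = t$ of pairwise commuting elements of $X$, with $k$ at most roughly $(n+2)/2$. The diameter bound $n+2$ then follows by applying the triangle inequality to two such paths. Each intermediate $x_i$ should be produced by simplifying one ``block'' of $x$ at a time --- collapsing a pair of negative $2$-cycles, cancelling a translational contribution on a cycle, or adjusting a single signed transposition --- while keeping a large common fixed-point set with $x_{i+1}$ and tracking the class invariants above to ensure we stay inside $X$.

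The principal obstacle I anticipate is the interplay between the translation lattice and the parity restrictions in type $\tilde D_n$: a natural intermediate involution may violate the even-sign-change condition or fall outside the sublattice, so corrective translations or sign-flips must be inserted without breaking either commutativity or membership in $X$. A related difficulty arises for ``small-support'' classes, where the fixed-point set of $w$ is too sparse to house the naive intermediate step; there a short detour through a less obvious involution may be required, and one must verify both that the detour lies in $X$ and that it does not push the total path length above $n+2$. Pinning down these boundary cases, type by type, is where the bulk of the case analysis will live.
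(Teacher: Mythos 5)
Your framework (realise $\tilde W$ as $W \ltimes L$, classify involution classes by signed cycle type together with the residue of the translation part, then build commuting paths to a base involution) has the right general shape, but two steps would fail as written. The first is quantitative: you propose to join every $x \in X$ to a base point $t$ by a path of length roughly $(n+2)/2$ and then use the triangle inequality. Since conjugation by $\tilde W$ acts transitively on $X$ and preserves commuting, $\C(\tilde W,X)$ is vertex-transitive, so $\max_{x\in X} d(x,t)$ \emph{equals} the diameter; your claim is therefore strictly stronger than the theorem (it would give diameter at most $(n+2)/2$), and there is no reason to expect such short paths. Indeed the constructions the paper gives for, e.g., labelled cycle type $(1,k_e,0,0)$ in $\tbn$ only reach the base point in up to $n+1$ steps, and the correct move is to note that bounding $d(x,t)$ by $n+2$ already suffices, with no triangle inequality at all. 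The second gap is that many classes are genuinely disconnected (for instance $m>0$, $l=0$, $k_e=1$; or $n=4$, $m=1$; or $m=l=0$), so the proposed path to $t$ simply cannot be built there; a complete argument must first isolate these classes (the paper does this via Lemmas \ref{1cycle}--\ref{2cycle} and the finite Weyl group results), and the ``short detour'' you invoke for small-support classes does not exist in these cases.

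Separately, redoing the whole $\tilde C_n$-style analysis ``in parallel'' for $\tilde B_n$ and $\tilde D_n$ misses the reduction the paper actually rests on: $\tdn \le \tbn \le \tcn$ with small index, so an involution class of $\tbn$ either coincides with its $\tcn$-class --- in which case the known $\tilde C_n$ connectedness and diameter results transfer verbatim --- or splits, and splitting occurs only for labelled cycle types with $k_o=l=0$ (and the analogous conditions in $\barb$ and $\tdn$), where the classes are separated by the mod-$4$ invariants $f(x)$ and $\minus(x)$. Only this handful of split classes needs new path constructions, and the delicate point there is exactly checking that each intermediate involution has the right value of $f$ and $\minus$ modulo $4$. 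Your plan would have to rediscover these invariants anyway to certify membership in $X$; organising the proof around the split/non-split dichotomy rather than a uniform block-by-block simplification is what makes the problem tractable.
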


This paper is structured as follows. In the remainder of this section we summarise results for finite classical Weyl groups that we will need. In Section 2, we describe the labelled cycle form for affine involutions, first introduced in \cite{amal} for type $\tilde C_n$, which we will use in Section 3 to develop a characterisation of involution conjugacy classes in types $\tilde B_n$ and $\tilde D_n$. In Section 4 we state and prove detailed results on connectedness and diameter of commuting involution graphs in types $\tilde B_n$ and $\tilde D_n$; in particular we obtain necessary and sufficient conditions under which $\C(\tilde W, X)$ is connected, and show that when the $\C(\tilde W, X)$ is connected the diameter is at most $n+2$, thus proving Theorem \ref{thmclassicalaffine} in these cases. Finally, in Section 5 we briefly discuss the exceptional affine groups. \\

For the rest of this section we review the facts about involution conjugacy classes in the classical finite Weyl groups. Throughout this paper we will use the convention that a Coxeter group of type $\Gamma$ will be denoted $W(\Gamma)$, where $\Gamma$ is the associated Coxeter graph. \\

  Let $W$ be of type $A_{n-1}$. Then $W \cong \sym(n)$, and $W$ acts on $\rr_n$ by permuting the subscripts of the standard basis $\{e_1, \ldots, e_n\}$; we view the elements of $W$ as permutations in the usual way.
  If $W$ is of type $B_n$ or $D_n$, then  
        we view the elements of $W$ as signed permutations; they act on $\rr^n$ by permuting the subscripts of the standard basis $\{e_1, \ldots, e_n\}$ and changing their signs. For example given $w =(\stackrel{-}{1}\hspace{0.2cm} \stackrel{+}{2}\hspace{0.2cm}\stackrel{-}{3} ) \in W(B_n)$,
          we have $w(e_1) = -e_2$, $w(e_2) = e_3$ and $w(e_3) = -e_1$.\\
          
        Expressing $\sigma$ as a product of disjoint cycles, we say that a
           cycle $(i_{1}\cdots i_{r})$ of $\sigma$ is {\em positive} if there is
           an even number of minus signs above its elements, and {\em
           negative} if the number of minus signs is odd. For example, $ (\stackrel{+}{1}\hspace{0.2cm} \stackrel{+}{3}\hspace{0.2cm}\stackrel{-}{2} )$    is a negative cycle, whereas $(\stackrel{-}{4}\;\stackrel{-}{5})$ is
           positive. The group $W(B_n)$ consists of all signed permutations of $n$, while $W(D_n)$ is the subgroup of index 2 in $W(B_n)$ consisting of signed permutations with an even number of minus signs. 
        It will be useful to record here for reference the characterisation of conjugacy classes in the groups $W( A_{n-1})$, $W(B_n)$ and $W(D_n)$.

           \begin{thm}\label{1.3.3}
        \begin{trivlist}
               \item[(\textit{i})]Elements of $W( A_{n-1})$  are conjugate if and only if they have the same cycle type. Involution conjugacy classes are parameterised by the number $m$ of transpositions in involutions of the class. 
           \item[(\textit{ii})]  Elements of  $W( B_n)$   are conjugate  if and only if they have the same signed cycle type.  Involutions contain only positive 2-cycles, negative 1-cycles and positive 1-cycles (fixed points). There is exactly one conjugacy class of involutions for each triple $(m,k,l)$ where $m$ is the number of transpositions, $k$ is the number of negative 1-cycles, $l$ is the number of fixed points and $2m+k+l = n$. 
           \item[(\textit{iii})] Conjugacy classes in $W(D_n)$ are parametrised by signed cycle type, with one class for each signed cycle type  except in the case where the signed cycle type contains only even length, positive cycles, where  there are two classes for each signed cycle type. Involutions contain  only positive 2-cycles, negative 1-cycles, and fixed points, with the number of negative 1-cycles always being even. As in $W(B_n)$, there is one conjugacy class for each triple $(m,k,l)$, except for the case $(m,0,0)$ (so that $2m = n)$. In this case, elements are conjugate precisely when the number of minus signs in their expressions as products of signed 
           cycles is congruent modulo $4$.
           \end{trivlist} 
           \end{thm}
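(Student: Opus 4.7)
The plan is to treat the three parts in sequence, using throughout the realisation of $W(B_n)$ as the group of signed permutations of $\{1,\ldots,n\}$ (equivalently, as permutations of $\{\pm 1,\ldots,\pm n\}$ commuting with negation), and the identification of $W(D_n)$ as the kernel of the sign character $\epsilon:W(B_n)\to\{\pm 1\}$ obtained by multiplying all minus signs; on signed-cycle decompositions, $\epsilon(w)=(-1)^{\#\text{negative cycles}}$.

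\textbf{Part (i).} This is the classical fact that conjugation sends the cycle $(i_1\cdots i_r)$ to $(\sigma(i_1)\cdots\sigma(i_r))$, so cycle type is invariant; conversely, given two permutations with the same cycle type one constructs a conjugator by matching cycles of equal length. An involution is a product of disjoint transpositions and fixed points, so its class is determined by the number $m$ of transpositions.

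\textbf{Part (ii).} A direct calculation shows that for $g\in W(B_n)$, conjugation sends the signed cycle $(\stackrel{\epsilon_1}{i_1}\cdots\stackrel{\epsilon_r}{i_r})$ to another signed cycle on the underlying set $\{|g(i_1)|,\ldots,|g(i_r)|\}$ with the same product of signs; hence signed cycle type is a conjugation invariant. For the converse, given $w_1,w_2$ of the same signed cycle type, I build a conjugator by first choosing a permutation of $\{1,\ldots,n\}$ matching cycles of $w_1$ to those of $w_2$, then choosing signs on each cycle to align their entries. For involutions, $w^2=1$ forces every orbit of $w$ on $\{\pm 1,\ldots,\pm n\}$ to have length $1$ or $2$; this rules out cycles of length $\ge 3$ and negative $2$-cycles (whose orbits have length $4$), leaving exactly the three listed types and the parametrisation $(m,k,l)$ with $2m+k+l=n$.

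\textbf{Part (iii).} I would invoke the standard index-$2$ criterion: the $W(B_n)$-class of $w\in W(D_n)$ either coincides with its $W(D_n)$-class or splits into two classes of equal size, the latter occurring iff $C_{W(B_n)}(w)\le\ker\epsilon$. The main technical task is thus the centraliser analysis: using the wreath-product description of centralisers in $\zz_2\wr\sym(n)$, organised by grouping cycles of $w$ by length and sign so that $C_{W(B_n)}(w)$ becomes a direct product of wreath factors, one checks that this centraliser contains an element outside $\ker\epsilon$ as soon as the cycle type of $w$ contains any odd-length cycle or any negative even-length cycle, while if every cycle of $w$ is positive and of even length the whole centraliser lies inside $\ker\epsilon$. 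For involutions this restricts the split to the case $k=l=0$, i.e.\ $2m=n$. In that split case, the two $W(D_n)$-classes are separated by the total number of minus signs in the signed-cycle notation modulo $4$: each positive $2$-cycle contributes $0$ or $2$ minus signs; conjugation by a transposition in $\sym(n)$ preserves the count; conjugation by a sign-flip pair changes it by $0$ or $4$ (depending on whether the two flipped positions lie in the same $2$-cycle of $w$); so the residue modulo $4$ is preserved by all of $W(D_n)$, whereas conjugation by a single external sign-flip shifts it by $2$, exhibiting the split. The verification of these congruences is the main fiddly step, but it is entirely explicit given the wreath-product structure of the centralisers.
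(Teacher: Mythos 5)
The paper never proves this statement: it is recorded in Section~1 purely as background (``It will be useful to record here for reference\dots''), the facts being classical --- the type $A$ part is elementary symmetric-group theory, and the $B_n$/$D_n$ parts go back to Carter's analysis of conjugacy classes in Weyl groups --- so there is no in-paper proof to compare yours against. Judged on its own terms, your outline is the standard argument and is essentially correct: invariance and sufficiency of (signed) cycle type via explicit conjugation; the classification of involutions by orbit lengths on $\{\pm 1,\dots,\pm n\}$, correctly excluding negative $2$-cycles (orbit length $4$); the index-$2$ criterion that the $W(B_n)$-class of $w\in W(D_n)$ splits if and only if $C_{W(B_n)}(w)\le\ker\epsilon$; the production of centralising elements of odd minus-count from any negative cycle (the cycle itself) or any odd-length positive cycle (the sign change on its support), which for involutions confines the split to $k=l=0$, $n=2m$; and the mod-$4$ minus-sign invariant. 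This is also exactly the strategy the paper deploys for its affine analogues (Theorems \ref{tbnconj} and \ref{tdnconj}), which rest on the same index-$2$ splitting criterion and on mod-$4$ invariants such as $f$ and $\minus$, so your route is fully consonant with the paper's methods.

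One small slip in part (iii): your parenthetical claim that conjugating by a sign-flip pair changes the minus-count ``by $0$ or $4$ depending on whether the two flipped positions lie in the same $2$-cycle'' is not quite the right case division. If the two flips land in different $2$-cycles, one written with $0$ and the other with $2$ minus signs, the net change is $+2-2=0$. The clean statement, which still gives everything you need, is that a single sign flip at one position changes the minus-count of its $2$-cycle by $\pm 2$; since every element of $W(D_n)$ is an unsigned permutation (count-preserving) times an even number of sign flips, the total change is a sum of evenly many terms $\pm 2$, hence $\equiv 0 \bmod 4$, while one external flip shifts the count by $\pm 2$ and so separates the two $W(D_n)$-classes. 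Two further points worth making explicit, both routine: the minus-count is well defined because the sign over each entry is intrinsic to the signed permutation (independent of how the cycles are written), and the general (non-involution) clause of (iii) --- that $C_{W(B_n)}(w)\le\ker\epsilon$ whenever all cycles are positive of even length --- does require carrying out the wreath-product centraliser check you only allude to.
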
 
           For example, in $W(D_4)$,  $\stackrel{++}{(1\;2)}\stackrel{++}{(3\;4)}$ is conjugate to 
           $\stackrel{--}{(1\;2)}\stackrel{--}{(3\;4)}$, but not to $\stackrel{--}{(1\;2)}\stackrel{++}{(3\;4)}$.\\

    We write $\diam \mathcal{C}(G,X)$ for the diameter of $\mathcal{C}(G,X)$
            when $\mathcal{C}(G,X)$ is a connected graph, in other words the maximum distance $d(x,y)$ between any $x, y \in X$ in the graph. We state here for reference the known results on connectedness and diameters for finite classical Weyl groups.
               
     \begin{thm}[Theorems 1.1 and 1.2 of \cite{symmetric}]
     Let $W = W(A_{n-1})$ and $X$ be a conjugacy class of involutions having $m$ transpositions and $l$ fixed points (where $2m+l = n \geq 2$).
     \begin{trivlist}
      \item[(\textit{i})] If $l=1$, then $\C(W,X)$ is disconnected.
      \item[(\textit{ii})] If $n=4$ and $m=1$, then $\C(W,X)$ is disconnected.
      \item[(\textit{iii}] If $n \in \{6,8,10\}$ and $l=2$, then  $\diam \C(W,X) =4$.
       \item[(\textit{iv}] In all other cases, $\diam \C(W,X) \leq 3$.
           \end{trivlist}
     \end{thm}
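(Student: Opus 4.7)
The plan is to handle the four parts separately: parts (i) and (ii) are disconnectedness results proved by direct structural arguments, while (iii) and (iv) are diameter bounds requiring the construction of short commuting paths.

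For (i), each $x \in X$ has a unique fixed point $f(x) \in \{1,\ldots,n\}$. If $x, y \in X$ commute, then $x$ permutes the fixed-point set of $y$, forcing $x(f(y)) = f(y)$; since $f(x)$ is the only point fixed by $x$, this gives $f(x) = f(y)$. Hence $\C(W, X)$ partitions into at least $n$ non-empty pieces indexed by the shared fixed point, so it is disconnected. For (ii), $X$ consists of the six transpositions in $\sym(4)$, and two distinct transpositions commute iff their supports are disjoint, yielding three components $\{(1\;2),(3\;4)\}$, $\{(1\;3),(2\;4)\}$, $\{(1\;4),(2\;3)\}$.

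For (iv), given distinct $x, y \in X$ we construct a path of length at most $3$. Form the auxiliary graph $\Gamma(x,y)$ on $\{1,\ldots,n\}$ whose edges are the $2$-cycles of $x$ (marked red) and of $y$ (marked blue). Every vertex has red degree and blue degree at most $1$, so the connected components of $\Gamma(x,y)$ are alternating paths and even alternating cycles; moreover an involution $z \in X$ commutes with both $x$ and $y$ iff $z$ preserves the partition of $\{1,\ldots,n\}$ into the vertex sets of these components and acts as an involution within each. When $\Gamma(x,y)$ is \emph{benign}, meaning its components are short or there are enough common fixed points, one reads off such a $z$ directly, giving distance at most $2$. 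Otherwise one intermediate is insufficient and we choose $z_1 \in X$ commuting with $x$ so that $\Gamma(z_1, y)$ becomes benign, then pass through a further $z_2 \in X$ commuting with $z_1$ and $y$, giving distance at most $3$. The hypothesis that we are not in the exceptional cases of (i)--(iii) is what supplies the room (enough fixed points, or sufficient $n$) to perform these modifications while staying inside $X$.

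Part (iii) covers the boundary cases $n \in \{6,8,10\}$ with $l = 2$, where the construction for (iv) narrowly fails to give distance $3$. The upper bound $\diam \C(W,X) \leq 4$ follows by adding one further commuting step to the construction above, while the matching lower bound requires exhibiting specific $x, y \in X$ for which no length-$3$ path exists, verified by enumerating the (severely restricted) candidate intermediates in these small groups. The main obstacle is precisely this boundary analysis: one must pin down the exact numerical thresholds on $(n, m, l)$ separating the distance-$2$, distance-$3$, and distance-$4$ regimes, and the three exceptional classes in (iii) must be treated by a careful direct computation rather than by the generic construction, since they lie at the edge where the combinatorics of $\Gamma(x,y)$ barely allows the final reduction step.
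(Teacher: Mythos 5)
The paper does not prove this result: it is quoted for reference from Bates, Bundy, Perkins and Rowley \cite{symmetric}, so there is no internal proof to compare against. Judged on its own terms, your argument for parts (i) and (ii) is correct and complete: a common neighbour of two involutions each having a single fixed point must share that fixed point, so the class splits into $n$ mutually non-adjacent pieces, and the six transpositions of $\sym(4)$ fall into three pairs of disjoint (hence commuting) transpositions.

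Parts (iii) and (iv) are where the entire content of the theorem lies, and there you have written a plan rather than a proof. The notion of a ``benign'' configuration is never defined beyond ``components are short or there are enough common fixed points''; the constructions of $z_1$ and $z_2$ are not carried out; the verification that the intermediates have the correct cycle type (i.e.\ lie in $X$ rather than merely in $W$) is not addressed; and the exceptional classes with $n\in\{6,8,10\}$ and $l=2$ --- which require both an upper bound of $4$ and a matching lower bound exhibiting a pair at distance exactly $4$ --- are deferred to an unperformed ``careful direct computation''. There is also a concrete error in the one precise claim you do make: an involution $z$ centralising both $x$ and $y$ centralises $\langle x,y\rangle$ and therefore \emph{permutes} the orbits of $\langle x,y\rangle$ (the components of your graph $\Gamma(x,y)$), but it need not stabilise each component and act as an involution within it; intermediates that interchange two isomorphic components are both allowed and genuinely needed in arguments of this kind. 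As it stands the proposal identifies a reasonable combinatorial framework but does not establish parts (iii) or (iv).
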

     
         \begin{thm}[Theorem 1.1 of \cite{Finite}] \label{finite}
                             Suppose that $W$ is of type $B_n$ ($n\geq 2$) or $D_n$ ($n \geq 4$), and let $X$ be a conjugacy class of involutions with signed cycle type $(m,k,l)$, where $2m+k+l = n$. Let $t:=\max\{k,l\}$. Then the following hold.
                             \begin{trivlist}
                             \item[(\textit{i})] If $m=0$, then $\C(G,X)$ is a complete graph.
                             \item[(\textit{ii})] If $t = 0$, then $\diam \C(G,X) \leq 2$.
                             \item[(\textit{iii})] If $t = 1$ and $m>0$, then $\C(G,X)$ is disconnected.
                             \item[(\textit{iv})] If $t \geq 2$ and $n>5$, then $\diam \C(G,X) \leq 4$.
                             \item[(\textit{v})] If $n=5$, $m=1$ and $t=2$ then $\diam \C(G,X)=5$. If $n=5$, $m=1$ and $t=3$ then $\diam \C(G,X)=2$. Finally if $n=4$, $m=1$ and $t=2$ then $\C(G,X)$ is disconnected.
                             \end{trivlist}
                           \end{thm}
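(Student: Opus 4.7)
The plan is to translate commuting into a combinatorial condition on signed cycles. By Theorem \ref{1.3.3}, an involution $x$ of type $(m,k,l)$ is determined up to conjugacy (modulo the mod-$4$ sign invariant in the exceptional $W(D_n)$ case) by its $m$ positive 2-cycles, $k$ negative 1-cycles, and $l$ fixed points. Two involutions $x, y \in X$ commute if and only if the underlying permutation of $y$ preserves each of the three cycle sets of $x$: each positive 2-cycle of $x$ is either invariant as a set under $y$ or swapped with another 2-cycle of $x$, and singletons of $x$ go to singletons of $y$ of the same sign. The whole problem thus becomes a combinatorial matching question on cycle supports.

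Parts (i)--(iii) are direct. For (i), when $m=0$ every element of $X$ is a diagonal signed permutation, so all pairs commute. For (ii), with $k=l=0$, any two $x,y \in X$ admit a common commuting neighbour $z \in X$ obtained by suitably re-pairing the elements on which the 2-cycle partitions of $x$ and $y$ disagree (looking at the union graph, whose components are alternating cycles and double edges, and pairing off opposite vertices in each cycle). For (iii), disconnectedness follows from a positional invariant: if $x$ has its unique fixed point at $p$ and $z \in X$ commutes with $x$, then $z(p)$ is a fixed point of $x$, forcing $z(p)=p$; since $z$ has no negative 1-cycles in the subcase $(k,l)=(0,1)$, the point $p$ must be $z$'s fixed point as well. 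Hence involutions with distinct singleton positions lie in distinct components, and the $(k,l)=(1,0)$ subcase is analogous.

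The main content is (iv). Given $x,y \in X$ with $t\geq 2$ and $n>5$, I would build a path of length at most $4$ by aligning features of $x$ with those of $y$ in stages. Two basic moves suffice. The first is a re-pairing of two 2-cycles $\{a,b\}, \{a',b'\}$ of $x$ into $\{a,a'\}, \{b,b'\}$; a direct calculation shows that the element $z$ whose relevant 2-cycles are $\{a,b'\}, \{b,a'\}$ (agreeing with $x$ elsewhere) commutes with both the original and the modified involution and lies in $X$. The second, which requires $t \geq 2$, exchanges a 2-cycle $\{a,b\}$ of $x$ with two singletons $c,d$ of the same sign, using an intermediate that retains one of the remaining 2-cycles of $x$ together with the new 2-cycle $\{c,d\}$; this again commutes with both endpoints and lies in $X$. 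Composing such moves, one first reaches an $x_1 \in X$ agreeing with $y$ on singleton positions, then $x_2, x_3$ aligning selected 2-cycles, and finally $x_4 = y$.

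The main obstacle is precisely the detailed case analysis needed to guarantee that such intermediates always exist in $X$ under all overlap patterns of the cycle structures of $x$ and $y$, particularly when the scratch space is minimal (e.g.\ $t=2$ combined with few spare 2-cycles), and to ensure the intermediate elements lie in the correct $W(D_n)$-class when the class splits under Theorem \ref{1.3.3}(iii). The threshold $n>5$ is what guarantees enough room for the construction; below it the scratch space is too tight, producing the sporadic behaviour in (v), which I would handle by direct enumeration, exhibiting an explicit pair $x,y$ of distance $5$ in the exceptional case and verifying that no length-$4$ path joins them.
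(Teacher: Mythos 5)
This statement is not proved in the paper at all: it is quoted verbatim as Theorem 1.1 of \cite{Finite} and used purely as background for the affine arguments of Section 4. There is therefore no in-paper proof to compare your attempt against, and it has to be judged on its own merits as a reconstruction of the Bates--Bundy--Perkins--Rowley result.

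Judged that way, parts (i) and (iii) of your sketch are essentially sound (for (iii) you should also cover the subcase $k=l=1$, where the same eigenvector argument pins down both the fixed point and the negative $1$-cycle of $x$), but two things prevent the proposal from being a proof. First, your combinatorial dictionary for commuting is not right as stated: if $x$ and $y$ commute then $y$ \emph{permutes} the fixed points of $x$ among themselves and the negative $1$-cycles of $x$ among themselves; a singleton of $x$ need not be a singleton of $y$ (take $x$ with fixed points $1,2$ and transposition $(\overset{+}{3}\,\overset{+}{4})$, and $y=(\overset{+}{1}\,\overset{+}{2})$ with fixed points $3,4$ --- they commute, yet $y(1)=2$ is not a singleton of $y$). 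Moreover, preservation of the cycle sets is necessary but not sufficient: the sign constraints recorded in Lemmas \ref{2cycle} and \ref{doubletrans} (for instance, a transposition on $\{\alpha,\beta\}$ never commutes with $(\overset{+}{\alpha})(\overset{-}{\beta})$) are precisely what makes the path-building delicate. Second, and decisively, part (iv) --- the entire content of the theorem --- is left as a plan: you describe two legal moves and then state that the main obstacle is the case analysis guaranteeing that the intermediates always exist in $X$, including the $W(D_n)$ class-splitting constraint and the tight configurations near $t=2$. That case analysis \emph{is} the proof; without it, and without the finite checks claimed for (v), nothing beyond (i)--(iii) has been established. For the level of detail actually required, compare the inductive arguments of Propositions \ref{m=1B} and \ref{ko=l=0} in this paper, which carry out exactly this kind of move-by-move analysis in the harder affine setting.
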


  \section{Affine involutions and the labelled cycle form}
  
  Our job in this section is to establish what involutions in classical affine Weyl groups look like.
Let $W$ be a finite Weyl group, with root system $\Phi$ contained in a Euclidean vector space $V \cong \rr^n$.
For a given root $\alpha$, recall that the corresponding coroot $\alpha^\vee$ is given by $\alpha^\vee = \frac{2\alpha}{\langle \alpha, \alpha\rangle}$, and we write $\Phi^\vee$ for the set of all coroots.  The affine Weyl group $\tilde W$ is then the semidirect product of $W$ with the translation group of the coroot lattice $L = L(\Phi^\vee)$. We usually identify the translation group with $L$. Then, any  $w \in \tilde W$ can be written in the form $(\sigma,\bv)$ where $\sigma \in W$ and $\bv=(v_1,v_2,...,v_n)\in L$. See, for example, \cite[Chapter 4]{humphreys} for more detail. \\

For $\sigma, \tau \in W$ and $\bu, \bv \in L$  we have $$(\sigma,\bv)(\tau,\bu) = (\sigma\tau,\bv^\tau + \bu).$$
We note that $(\sigma,\bv)^{-1} =(\sigma^{-1}, -\bv^{\sigma^{-1}})$.
Moreover, the element $(\sigma,\mathbf{\bv})$ is conjugate to $(\tau,\mathbf{u})$ via some $(g,\mathbf{w})$ if and only if: \begin{align} (\tau,\mathbf{u}) = (\sigma, \mathbf{v})^{(g,\mathbf{w})}
&= (g^{-1}\sigma g, \mathbf{v}^g + \mathbf{w} - \mathbf{w}^{g^{-1} \sigma g}).\label{eqconj}
\end{align}
   
The reflections of $\tilde W$ are the affine reflections $s_{{\alpha},k}$ ($\alpha \in\Phi$, $k \in\zz$). Recall that, for $\bv$ in $V$,  \begin{align*}
s_{{\alpha}}(\bv) &= \bv - 2\frac{\langle \alpha, \bv\rangle}{\langle \alpha, \alpha\rangle}\alpha = \bv - \langle \alpha, \bv\rangle \alpha^\vee\\
s_{{\alpha},k}(\bv) &= \bv - 2\frac{(\langle \alpha, \bv\rangle - k)}{\langle \alpha, \alpha\rangle}\alpha = s_{\alpha}(\bv) + k\alpha^{\vee}.
\end{align*}If $R$ is a set of  simple reflections for $W$ and $\tilde \alpha$ is the highest root (that is, the root with the highest coefficient sum when expressed as a linear combination of simple roots), then it can be shown that $R\cup\{s_{\tilde\alpha ,1}\}$ is a set of simple reflections for $\tilde W$.\\
          
We will now focus on the classical affine groups. Since it turns out that $W(\tilde A_n) \leq W(\tilde D_n) \leq  W(\tilde B_n) \leq  W(\tilde C_n)$, we assume for the moment that $ \tilde W= W(\tilde C_n)$. 
The Coxeter graph $\tilde C_n$ is as follows:

\begin{center}
	\unitlength 1.00mm
	\linethickness{0.4pt}
	\begin{picture}(100.00,15)(-8,2)
	\put(-25,10){\makebox(0,0)[lc]  {$ \tilde C_n (n\geq 2)$}}
	\put(10,14){\makebox(0,0)[cc]{$r_{1}$}}
	\put(22,14){\makebox(0,0)[cc]{$r_{2}$}}
	\put(34,14){\makebox(0,0)[cc]{$r_{3}$}}
	\put(10,10.5){\line(1,0){12}}
	\put(10,9.5){\line(1,0){12}}
	\put(22,10){\line(1,0){20}}
	\put(62,10){\line(1,0){20}}
	\put(82,10.5){\line(1,0){12}}
	\put(82,9.5){\line(1,0){12}}
	\put(70,10.00){\circle*{2.00}}
	\put(94.00,10.00){\circle*{2.00}}
	\put(10,10.00){\circle*{2.00}}
	\put(22,10.00){\circle*{2.00}}
	\put(34,10.00){\circle*{2.00}}
	\put(82,10.00){\circle*{2.00}}
	\put(70,14){\makebox(0,0)[cc]{$r_{n-1}$}}
	\put(82,14){\makebox(0,0)[cc]{$r_{n}$}}
	\put(94,14){\makebox(0,0)[cc]{$r_{n+1}$}}
	\put(47.00,10.00){\line(1,0){0.4}}
	\put(52.00,10.00){\line(1,0){0.4}}
	\put(57.00,10.00){\line(1,0){0.4}}
	\end{picture}
\end{center}

The root system $\Phi$ of type $C_n$ consists of the long roots $\{2e_i: 1 \leq i \leq n\}$ and the short roots $\{\pm e_i \pm e_j: 1 \leq i < j \leq n\}$. Therefore the set of coroots contains $\{e_1, ..., e_n\}$, meaning that the coroot lattice here is just $\{\bv=(\lambda_1,...,\lambda_n): \lambda_i\in \zz \}$. So $W(\tilde C_n)$ consists of all elements $w = (\sigma,\bv)$ where $\sigma$ is a signed permutation and $\bv \in \zz^n$. For the simple roots of $\Phi$ we can take $2e_1$, $e_1-e_2$, $\ldots$ $e_{n-1} - e_n$, with corresponding reflections $(\overset{-}{1})$, $(\overset{+}{1}\overset{+}{2})$, $\ldots$, $(\overset{+}{n\! -\! 1}\;\;\overset{+}{n})$. The highest root is $2e_n$. Consequently we can set $r_1 = (\overset{\overset{0}{-}}{1})$, $r_i = (\overset{\overset{0}{+}}{i\! -\! 1}\;\,\overset{\overset{0}{+}}{i})$ for $2 \leq i \leq n$, and $r_{n+1} = (\overset{\overset{1}{-}}{n})$.\\

By the definition of group multiplication in $\tilde W $, we see that
the element $(\sigma, \mathbf{v})$ of $\tilde W $ is an involution
precisely when $(\sigma^2, \mathbf{v}^{\sigma} + \mathbf{v}) =(1,\mathbf{0})$. This allows us to characterise the involutions in $\tilde W $.\\

\begin{lemma}[Lemma 2.1 of \cite{amal}] \label{invn}
	A non-identity element $(\sigma, \mathbf{v})$ of $\tilde{W}$ is an involution if and only if $\sigma$, when expressed as a product of disjoint signed cycles, has the form
	\[\sigma =(\overset{+ \;\; + \;\;}{a_1 \, b_1})\cdots 
	(\overset{+ \;\; + \;}{a_t \, b_t}) (\overset{- \;\;\;\;\;\; -}{a_{t+1} \, b_{t+1}}) \cdots (\overset{- \;\;\;\; - \;\;}{a_m \, b_m}) 
	\stackrel{-}{(c_{2m+1})} \cdots \stackrel{-}{(c_{n-l})} (\stackrel{+}{d_{n-l+1}})\cdots\stackrel{+}{(d_n)}\]
	for some $a_i, b_i, c_i, d_i, t, m$ and $l$; and, writing $\bv = (v_1, \ldots, v_n)$, we have   
	$v_{b_i} = -v_{a_i}$ when $\sigma$ contains  $(\overset{+ \;\; +}{a_i \, b_i})$, $v_{b_i} = v_{a_i}$ when $\sigma$ contains $(\overset{- \;\; -}{a_i \, b_i})$ and $v_{d_i} = 0$ for $n-l < i \leq n$.
\end{lemma}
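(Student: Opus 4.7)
The plan is to unpack the involution condition using the given multiplication formula, then split the analysis into (a) what $\sigma^2=1$ forces on the signed cycle structure of $\sigma$, and (b) what the residual vector equation forces on the entries of $\bv$.

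First I would compute, directly from $(\sigma,\bv)(\tau,\bu)=(\sigma\tau,\bv^{\tau}+\bu)$, that
\[
(\sigma,\bv)^{2}=(\sigma^{2},\bv^{\sigma}+\bv).
\]
Hence $(\sigma,\bv)$ is a non-identity involution if and only if $\sigma^{2}=1$ in $W=W(C_{n})$ and $\bv^{\sigma}=-\bv$ in $L=\zz^{n}$. Since $\sigma$ is then an involution, the right action $\bv\mapsto\bv^{\sigma}$ coincides with the left action $\bv\mapsto\sigma(\bv)$, so the second condition is simply $\sigma(\bv)=-\bv$.

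Next I would classify the signed cycles that can appear in a signed-permutation involution. Writing $\sigma(e_{i})=\eta_{i}e_{\sigma(i)}$ with $\eta_{i}\in\{\pm 1\}$, a length-$r$ cycle $(\overset{\epsilon_{1}}{i_{1}}\cdots\overset{\epsilon_{r}}{i_{r}})$ satisfies $\sigma^{r}(e_{i_{1}})=\epsilon_{1}\cdots\epsilon_{r}\,e_{i_{1}}$. For $\sigma^{2}=1$ one needs every cycle to have length at most $2$, and when the length is $2$ one needs $\epsilon_{1}\epsilon_{2}=1$. This leaves exactly four possibilities for the cycles of $\sigma$: positive fixed points $(\overset{+}{d})$, negative fixed points $(\overset{-}{c})$, positive $2$-cycles $(\overset{+}{a}\overset{+}{b})$, and negative $2$-cycles $(\overset{-}{a}\overset{-}{b})$. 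This justifies the stated form of $\sigma$.

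Finally I would read off $\sigma(\bv)=-\bv$ coordinate by coordinate. Comparing coefficients of $e_{j}$ on both sides of $\sigma(\bv)=\sum_{i}v_{i}\eta_{i}e_{\sigma(i)}=-\sum_{j}v_{j}e_{j}$ yields
\[
v_{\sigma^{-1}(j)}\,\eta_{\sigma^{-1}(j)}=-v_{j}\qquad(1\le j\le n).
\]
Applying this to a positive fixed point $d$ (so $\sigma^{-1}(d)=d$, $\eta_{d}=+1$) gives $v_{d}=0$; applying it to a negative fixed point $c$ (so $\eta_{c}=-1$) gives the tautology $v_{c}=v_{c}$, so no constraint; applying it to a positive $2$-cycle $(\overset{+}{a}\overset{+}{b})$ (where $\eta_{a}=\eta_{b}=+1$) gives $v_{b}=-v_{a}$; and applying it to a negative $2$-cycle $(\overset{-}{a}\overset{-}{b})$ (where $\eta_{a}=\eta_{b}=-1$) gives $v_{b}=v_{a}$. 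These are precisely the conditions in the lemma, and since every step is reversible we obtain the desired equivalence.

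The proof is essentially bookkeeping; the only real hazard is the sign convention on signed cycles, so I would carefully fix the convention $\sigma(e_{i_{k}})=\epsilon_{k}\,e_{i_{k+1}}$ (exactly as illustrated by the example $(\overset{-}{1}\overset{+}{2}\overset{-}{3})$ given earlier) before doing the case analysis, and verify each of the four cycle types once using this convention.
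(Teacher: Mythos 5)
Your proposal is correct and follows essentially the same route the paper indicates: the paper quotes this lemma from \cite{amal} and precedes it with exactly your starting observation, that $(\sigma,\bv)$ is an involution precisely when $(\sigma^2,\bv^\sigma+\bv)=(1,\bo)$, after which the cycle classification and the coordinate-by-coordinate reading of $\bv^\sigma=-\bv$ are the evident bookkeeping you carry out. Your sign conventions and all four case computations check out against the paper's example $(\overset{-}{1}\,\overset{+}{2}\,\overset{-}{3})$, so there is nothing to add.
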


      As described in \cite{amal}, Lemma \ref{invn} allows us to use a shorthand for writing involutions of $\tilde W$.              
    For an element $(w, \bv)$ of $\tilde W$ with $\bv = (v_1, \ldots, v_n)$, then above each signed number $i$ in the expression of $w$ as a product of disjoint signed cycles, we will write $v_i$. For example, we would write $((\overset{-}{1}\hspace{0.1cm} \overset{+}{2}\hspace{0.1cm}\overset{-}{3}), 4e_1 + 5e_2 + 6e_3)$ as 
            $(\overset{\overset{4}{-}}{1}\hspace{0.1cm} \overset{\overset{5}{+}}{2}\hspace{0.1cm}\overset{\overset{6}{-}}{3})$.
           Suppose $(\sigma, \bv)$ is an involution with the form given in Lemma~\ref{invn}. 
            For transpositions $(\overset{\pm \; \pm}{a_i \; b_i})$ of $\sigma$, where the number above $a_i$ determines the number above $b_i$ as described in Lemma~\ref{invn}, we write $(\overset{\stackrel{\lambda}{+}}{a_i b_i})$
             for $(\overset{\stackrel{\lambda}{+} \; \overset{-\lambda}{+}}{a_i \, b_i})$ and $(\overset{\stackrel{\lambda}{-}}{a_i b_i})$
              for $(\overset{\stackrel{\lambda}{-} \; \overset{\lambda}{-}}{a_i \, b_i})$. We will call this the {\em labelled cycle form} of $(\sigma, \bv)$. Where it is helpful, we adopt the convention that cycles $(\overset{\overset{0}{+}}{d_i})$ are omitted, as these fix both $d_i$ and $v_{d_i}$. In labelled cycle form, the simple reflections of $W(\tilde C_n)$ become $r_1 = (\overset{\overset{0}{-}}{1})$, $r_i = (\overset{\overset{0}{+}}{i\! -\! 1\;\, i})$ for $2 \leq i \leq n$, and $r_{n+1} = (\overset{\overset{1}{-}}{n})$.

\begin{notation}\label{notation} For a vector $\bv = (v_1, \ldots, v_n)$ we write $\sum \bv$ for $\sum_{i=1}^n v_i$, the coordinate sum of $\bv$, and $\sum^+ \bv$ for $\sum_{i=1}^n |v_i|$. For a signed permutation $\sigma$, we let $\minus(\sigma)$ be the number of minus signs in $\sigma$. By extension for an element $w = (\sigma, \bv)$ of $\tcn$ we write $\sum w$ for the coefficient sum of $\bv$, $\sum^+ w$ for $\sum^+ \bv$ and $\minus(w)$ for the number of minus signs in $\sigma$. Using the notation above, if $g =(\sigma,\bv)$, we define $f(g) = 2\sum_{i=1}^m v_{a_i} + \sum_{j=1}^{n-2m-l} v_{c_{2m+j}}$. We note that we will only ever be interested in the value $f(g)$ up to congruence modulo 4, so any ambiguity about the choice of ordering of $a_i$ and $b_i$ in transpositions will not matter. \end{notation}
As an example, for the element $$g = \left((\overset{+}{1}\, \overset{+}{2})(\overset{-}{3}\,\overset{-}{4})(\overset{-}{5})(\overset{-}{6})(\overset{+}{7}),(1,-1,3,3,2,4,0)\right) = (\overset{\overset{1}{+}}{1\, 2})(\overset{\overset{3}{-}}{3\, 4})(\overset{\overset{2}{-}}{5})(\overset{\overset{4}{-}}{6})(\overset{\overset{0}{+}}{7}),$$ we have $\sum g = 12$, $f(g) = 14$ and $\minus(g) = 4$.\\

There is a subgroup of index 2 in $W(\tilde C_n)$ which is of type $\tilde B_n$, having the following Coxeter graph.

\begin{center}
	\unitlength 1.00mm
	\linethickness{0.4pt}	\begin{picture}(85,25)(-8,-2)
	\put(10,14){\makebox(0,0)[cc]{$r_{1}$}}
	\put(22,14){\makebox(0,0)[cc]{$r_{2}$}}
	\put(34,14){\makebox(0,0)[cc]{$r_{3}$}}
	\put(10,10.5){\line(1,0){12}}
	\put(10,9.5){\line(1,0){12}}
	\put(22,10){\line(1,0){20}}
	\put(62,10){\line(1,0){8}}
	\put(70,10.00){\circle*{2.00}}
	\put(10,10.00){\circle*{2.00}}
	\put(22,10.00){\circle*{2.00}}
	\put(34,10.00){\circle*{2.00}}
	\put(70,14){\makebox(0,0)[cc]{$r_{n-1}$}}

	\put(47.00,10.00){\line(1,0){0.4}}
	\put(52.00,10.00){\line(1,0){0.4}}
	\put(57.00,10.00){\line(1,0){0.4}}	
	\put(70,10.00){\line(1,-1){12}} \put(70,10){\line(1,1){12}}
	\put(82,22){\circle*{2.00}}
	\put(81,-1){\circle*{2.00}}
	\put(86,22){\makebox(0,0)[cc]{$s$}}
	\put(86,-1){\makebox(0,0)[cc]{$r_{n}$}} \put(-25,10){\makebox(0,0)[lc]  {$\tilde B_n (n\geq 3)$}}
	\end{picture}  
\end{center}

Here $r_1$, $\ldots$, $r_n$ are the same as in the $\tilde C_n$ graph, and we set $s =(\overset{\overset{1}{-}}{n\! -\! 1\;\,n})$. (This agrees with the fact that the $B_n$ root system has short roots $e_i$ and long roots $\pm e_i \pm e_j$, meaning that the highest root is $e_{n-1} + e_n$.) Note that all the generators of this group are elements $(\sigma, \bv)$ satisfying the property that the coefficient sum of $\bv$ is even, and it is not difficult to see that in fact every element with this property lies in $W(\tilde B_n)$. That is, $W(\tilde B_n) = \{(\sigma, \bv) \in W(\tilde C_n) : \sum \bv \in 2\zz\}$, a subgroup of index 2 in $W(\tilde C_n)$.\\

It is also useful to consider another copy of $\tbn$ in $\tcn$, which we will denote $\barb$, whose Coxeter graph is as shown. 

\begin{center}
	\unitlength 1.00mm
	\linethickness{0.4pt}
	\begin{picture}(65,25)(32,-2)
%	\put(-25,10){\makebox(0,0)[lc]  {$\barb (n\geq 2)$}}
	\put(34,10.00){\line(-1,-1){12}} \put(34,10){\line(-1,1){12}}
\put(22,22){\circle*{2.00}}
\put(22,-1){\circle*{2.00}}
\put(19,22){\makebox(0,0)[cc]{$t$}}
\put(18,-1){\makebox(0,0)[cc]{$r_{2}$}}
	\put(34,14){\makebox(0,0)[cc]{$r_{3}$}}
	\put(34,10){\line(1,0){8}}
	\put(62,10){\line(1,0){20}}
	\put(82,10.5){\line(1,0){12}}
	\put(82,9.5){\line(1,0){12}}
	\put(70,10.00){\circle*{2.00}}
	\put(94.00,10.00){\circle*{2.00}}
	\put(34,10.00){\circle*{2.00}}
	\put(82,10.00){\circle*{2.00}}
	\put(70,14){\makebox(0,0)[cc]{$r_{n-1}$}}
	\put(82,14){\makebox(0,0)[cc]{$r_{n}$}}
	\put(94,14){\makebox(0,0)[cc]{$r_{n+1}$}}
	\put(47.00,10.00){\line(1,0){0.4}}
	\put(52.00,10.00){\line(1,0){0.4}}
	\put(57.00,10.00){\line(1,0){0.4}}
	\end{picture}
\end{center}

In this case $r_2$, $\ldots$, $r_{n+1}$ are as in $W(\tilde C_n)$, and we set $t = (\overset{\overset{1}{-}}{1\;\,2})$. 
It is not hard to see that $\bar W(\tilde B_n) = \{w \in W(\tilde C_n): \sum w \equiv \minus(w) \mod{2}\}$. Certainly the set of such elements has index 2 in $W(C_n)$ and contains $\bar W(\tilde B_n)$; hence it must exactly be $\bar W(\tilde B_n)$. We observe that $\tbn$ and $\barb$ are interchanged by the automorphism of $\tcn$ induced by the nontrivial graph automorphism $\omega$ of its Coxeter graph, which interchanges $r_i$ with $r_{n+2-i}$ for each $i$. \\

The intersection of $W(\tilde B_n)$ and $\bar W(\tilde B_n)$ is another affine Coxeter group, this time of type $W(\tilde D_n)$, with Coxeter graph  

\unitlength 1.00mm
\linethickness{0.4pt}
\begin{picture}(90.00,29)(-40,-4)
\put(22,10.70){\line(-1,1){10}}
\put(22,9.00){\line(-1,-1){10}}
\put(22,10.00){\line(1,0){8}}
\put(50,10.00){\line(1,0){8}}
\put(58,10.00){\line(1,-1){12}}
\put(58,9.02){\line(1,1){12}}
\put(12,20.65){\circle*{2.00}}
\put(70,20.00){\circle*{2.00}}
\put(58.00,10.00){\circle*{2.00}}
\put(12,-1.00){\circle*{2.00}}
\put(22,10.00){\circle*{2.00}}
\put(70,-1.00){\circle*{2.00}}
\put(8,20){\makebox(0,0)[cc]{$t$}}
\put(24,14){\makebox(0,0)[cc]{$r_{3}$}}
\put(58,14){\makebox(0,0)[cc]{$r_{n-1}$}}
\put(74,20){\makebox(0,0)[cc]{$s$}}
\put(75,0){\makebox(0,0)[cc]{$r_{n}$}}
\put(7,0){\makebox(0,0)[cc]{$r_{2}$}}
\put(-25,10){\makebox(0,0)[lc]  {$ \tilde D_n (n\geq 4)$}}
\put(35.00,10.00){\line(1,0){0.4}}
\put(45.00,10.00){\line(1,0){0.4}}
\put(40.00,10.00){\line(1,0){0.4}}
\end{picture}          

where $r_i$, $s$ and $t$ are as defined in $\tbn$ and $\barb$.\\

Finally, for completeness, we note that we can consider $W(\tilde A_{n-1})$ to be the subgroup of $W(\tilde D_n)$ whose elements have no minus signs. 

\section{Conjugacy classes of involutions}

 In this section we give a characterisation of involution conjugacy classes in terms of {\em labelled cycle types}. 
         \begin{defn}
          Let $a$ be an involution in $W(\tilde C_n)$. The {\em labelled cycle type} of $a$ is the tuple $(m,k_e, k_o, l)$, where $m$ is the number of transpositions, $k_e$ is the number of negative 1-cycles with an even number above them, $k_o$ is the number of negative 1-cycles with an odd number above them, and $l$ is the number of positive 1-cycles (fixed points), in the labelled cycle form of $a$. 
          \end{defn}
         For example, the labelled cycle type of ${\stackrel{\stackrel{0}{+}}{(1 2)}}\stackrel{\stackrel{2}{-}}{(3)}\stackrel{\stackrel{1}{-}}{(4)}\stackrel{\stackrel{3}{-}}{(5)} \stackrel{\stackrel{0}{-}}{(6)}
                   \stackrel{\stackrel{3}{-}}{(7)} 
                   \stackrel{\stackrel{0}{+}}{(8)}$ is $(1,2,3,1)$.\\

 \begin{thm}[Theorem 2.6 of \cite{amal}]\label{conjclass}
  Involutions in $W(\tilde C_n)$ are conjugate if and only if they have the same labelled cycle type. In particular, every involution is conjugate to exactly one element $a = a_{m, k_e, k_o, l}$ of the form
 $$a={\overset{\overset{0}{+}}{(1\; 2)}} \cdots \overset{\overset{0}{+}}{(2m-1\;2m)}
  \overset{\overset{0}{-}}{(2m+1)}\cdots \overset{\overset{0}{-}}{(2m+k_e)}
  \overset{\overset{1}{-}}{(2m+k_e+1)}\cdots 
  \overset{\overset{1}{-}}{(n-l)}\overset
  {\overset{0}{+}}{(n-l+1)}\cdots 
  \overset{\overset{0}{+}}{(n)}.$$
  \end{thm}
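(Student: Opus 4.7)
The plan is to use the conjugation identity (\ref{eqconj}) to treat both directions together. I would first prove the ``only if'' direction by showing that the labelled cycle type $(m,k_e,k_o,l)$ is a conjugation invariant, and then, for the ``if'' direction, show that any involution of labelled cycle type $(m,k_e,k_o,l)$ can be conjugated to the explicit element $a_{m,k_e,k_o,l}$.

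For invariance, the signed cycle type of the $W$-component is preserved by $W$-conjugation, so the integers $m$, $k_e + k_o$ and $l$ are immediate invariants. The only substantive point is that the parity of the scalar on each negative $1$-cycle is preserved. I split this check into two cases. Conjugation by $(g,\bo)$ sends $(\sigma,\bv)$ to $(g^{-1}\sigma g, \bv^g)$, which permutes the coordinates of $\bv$ and possibly negates some, so no parities are changed. Conjugation by $(1,\bw)$ sends $(\sigma,\bv)$ to $(\sigma, \bv + \bw - \bw^\sigma)$, and on a negative $1$-cycle at position $c$ we have $\sigma(e_c) = -e_c$, giving $(\bw - \bw^\sigma)_c = w_c - (-w_c) = 2w_c$, which is always even. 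Combining these, the parity of the label on each negative $1$-cycle is preserved under every conjugation in $\tcn$, so $k_e$ and $k_o$ are each invariant.

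For the canonical form, I would proceed in three steps, each using a different type of conjugating element. First, using a permutation in $W$, rearrange the disjoint cycles so that transpositions occupy positions $1,\ldots,2m$; negative $1$-cycles with even labels occupy $2m+1,\ldots,2m+k_e$; negative $1$-cycles with odd labels occupy $2m+k_e+1,\ldots,n-l$; and fixed points occupy the remaining positions. Second, convert any remaining negative transposition $(\overset{-\,-}{a\,b})$ into positive form by conjugating by $(\overset{-}{a})$, using the routinely checked identity $(\overset{-}{a})^{-1}(\overset{-\,-}{a\,b})(\overset{-}{a}) = (\overset{+\,+}{a\,b})$, which only negates that transposition's label. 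Third, choose a single $\bw \in \zz^n$ and conjugate by $(1,\bw)$ to reduce every label to its canonical value: on a positive transposition $(\overset{\overset{\lambda}{+}}{2i\!-\!1\;\,2i})$ take $w_{2i}=\lambda$ and $w_{2i-1}=0$, which kills the label; on a negative $1$-cycle at position $c$ with current label $\mu$, take $w_c$ so that $2w_c$ equals $-\mu$ or $1-\mu$ according as $\mu$ is even or odd, sending the label to the canonical value $0$ or $1$ respectively. Since these prescriptions involve disjoint coordinates, they can be combined into a single $\bw$.

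The main obstacle, such as it is, is the parity computation $(\bw - \bw^\sigma)_c = 2w_c$ on a negative $1$-cycle. This single observation both forces the parity invariance of the labels and supplies exactly the flexibility needed to reduce arbitrary labels on negative $1$-cycles to their canonical values. Once it is in hand, the ``if'' direction follows immediately, since two involutions of the same labelled cycle type are both conjugate to the same canonical element $a_{m,k_e,k_o,l}$ and hence to each other.
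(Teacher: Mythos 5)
Your argument is correct. Note that the paper does not actually prove this statement---it is imported verbatim as Theorem 2.6 of \cite{amal}---so there is no in-text proof to compare against; but your two-step scheme (parity of the label on each negative $1$-cycle is invariant because $(\bw-\bw^\sigma)_c=2w_c$ there, while a single translation plus sign changes $(\overset{-}{a})$ and a positional permutation reduce any involution to the canonical $a_{m,k_e,k_o,l}$) is the natural argument and matches how the cited paper treats these classes. The only point worth making explicit is the factorisation $(g,\bw)=(g,\bo)(1,\bw)$, which is what licenses checking invariance for the two kinds of conjugating element separately.
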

   
 For completeness, we state the corresponding result for type $\tilde A_n$.
 See \cite{Perkins} for details.
       Since elements of $W(\tilde A_{n-1})$ have no minus signs, involutions here contain transpositions $\stackrel{++}{(a\;b)}$ and fixed points (positive $1$-cycles) only, and the positive $1$-cycles must have zeroes above them. We may therefore omit the signs and $1$-cycles and write involutions as
      \begin{center}
      $\stackrel{\lambda_1}{(a_1\;b_1)}\stackrel{\lambda_2}{(a_2\;b_2)}\cdots \stackrel{\lambda_m}{(a_m\;b_m)}$
      \end{center} 
       \begin{thm}[see \cite{Perkins}] 
       Involutions in $W( \tilde A_{n-1})$ are conjugate if and only if  they have the same number of transpositions,  except in the case where there are no fixed points. In this case there are two conjugacy classes, and elements           
                        $\prod_{i=1}^{m}\stackrel{\lambda_i}{(a_i\;b_i)}$ and $\prod_{i=1}^{m}\stackrel{\mu_i}{(c_i\;d_i)}$                   
         are conjugate if and only if  $\sum_{i=1}^{m} \lambda_i\equiv \sum_{i=1}^{m}\mu_i \mod 2$.                   
          \end{thm}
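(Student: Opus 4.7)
The plan is to apply the conjugation formula \eqref{eqconj} inside $W(\tilde A_{n-1}) \cong \sym(n) \ltimes L_0$, where $L_0 = \{\bv \in \zz^n : \sum v_i = 0\}$ is the $A_{n-1}$ coroot lattice. An involution written $\prod_{i=1}^m \stackrel{\lambda_i}{(a_i\;b_i)}$ corresponds to the pair $(\sigma, \bv)$ in which $\sigma$ is the unsigned product of transpositions and $v_{a_i} = \lambda_i$, $v_{b_i} = -\lambda_i$, with $v_d = 0$ at any fixed point $d$; in particular $\sum \bv = 0$ automatically. Conjugacy between two such involutions amounts to the existence of some $(g, \bw) \in \sym(n) \ltimes L_0$ realising the prescribed transformation.

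First, by conjugating by $(g, \bo)$ with a suitable $g \in \sym(n)$, I can assume both involutions have the standard underlying permutation $\sigma_0 = (1\;2)(3\;4)\cdots(2m-1\;2m)$, with fixed points in positions $2m+1,\ldots,n$. This step only relabels positions, possibly swapping $a_i \leftrightarrow b_i$ within a transposition (which negates the corresponding $\lambda_i$), so $\sum \lambda_i \bmod 2$ is preserved. With $\sigma = \sigma_0$ now fixed, conjugation by $(1, \bw)$ with $\sum_j w_j = 0$ sends $\bv$ to $\bv + (\bw - \bw^{\sigma_0})$; the added vector has entry $w_{2i-1} - w_{2i}$ at position $2i-1$, entry $-(w_{2i-1} - w_{2i})$ at position $2i$, and $0$ at every fixed point. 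So each $\lambda_i$ can be shifted by $w_{2i-1} - w_{2i}$ while fixed-point coordinates remain $0$. If $n > 2m$, the ``free'' coordinates $w_{2m+1},\ldots,w_n$ can absorb any sum, so every integer shift of the $\lambda_i$'s is achievable and all involutions with $m$ transpositions are conjugate.

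If $n = 2m$, the constraint $\sum_j w_j = 0$ becomes $\sum_i (w_{2i-1} + w_{2i}) = 0$, while the shift of $\sum_i \lambda_i$ equals $\sum_i (w_{2i-1} - w_{2i})$, which is congruent to $\sum_j w_j = 0$ modulo $2$. Hence $\sum_i \lambda_i \bmod 2$ is a conjugacy invariant. Conversely, given label sequences $(\lambda_i)$ and $(\mu_i)$ with $\sum \lambda_i \equiv \sum \mu_i \pmod 2$, writing $\delta_i := \mu_i - \lambda_i$ and $\sum \delta_i = 2k$, the choice $w_{2i-1} = \delta_i$, $w_{2i} = 0$ for $i < m$ together with $w_{2m-1} = \delta_m - k$, $w_{2m} = -k$ gives a $\bw \in L_0$ realising the required shift. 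I do not anticipate a serious obstacle: the argument is essentially a streamlined version of Theorem \ref{conjclass}, the only substantive feature being that the $A_{n-1}$-lattice constraint $\sum \bv = 0$ produces the mod $2$ obstruction exactly when there are no fixed-point coordinates to act as a reservoir.
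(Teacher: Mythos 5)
Your argument is correct, but note that the paper does not actually prove this statement: it is quoted verbatim from \cite{Perkins} (``see \cite{Perkins} for details''), so there is no internal proof to compare against. What you have written is a legitimate self-contained derivation, and it is methodologically the same computation the authors do use when they \emph{do} prove a conjugacy classification, namely in Theorem \ref{tbnconj}: factor a general conjugator as $(g,\bo)(1,\bw)$, observe that the $(g,\bo)$ part only permutes labels and negates a $\lambda_i$ when it swaps $a_i \leftrightarrow b_i$ (preserving $\sum_i \lambda_i \bmod 2$), and then read off from Equation \eqref{eqconj} that $(1,\bw)$ shifts $\lambda_i$ by $w_{2i-1}-w_{2i}$. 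Your identification of the fixed-point coordinates as the ``reservoir'' that absorbs the lattice constraint $\sum \bw = 0$ is exactly the right structural point, and it is the precise analogue of the role the $1$-cycles play in the proofs of Theorems \ref{tbnconj}--\ref{tdnconj} (where a spare $1$-cycle supplies a centralising element outside the index-two subgroup). The one place where you should be slightly more explicit is the invariance direction for $n=2m$: after normalising both involutions to the underlying permutation $\sigma_0$, a general conjugator has $g$-part in $C_{\sym(n)}(\sigma_0)$, not necessarily $g=1$; but since such $g$ only permute the transpositions and swap within them, your first-paragraph observation already covers this, so it is a matter of wording rather than a gap. Your explicit $\bw$ for the converse checks out: the shifts are $\delta_i$ for all $i$ and $\sum_j w_j = \sum_i\delta_i - 2k = 0$.
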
           
        For example, in $W(\tilde A_3)$,   $\stackrel{0}{(1\;2)}\stackrel{0}{(3\;4)}$  and $\stackrel{1}{(1\;2)}\stackrel{1}{(3\;4)}$ are conjugate to each other, but not to $\stackrel{1}{(1\;2)}\stackrel{0}{(3\;4)}$.\\

We now move onto $W(\tilde B_n)$ and $W(\tilde D_n)$. We recall the elementary result from group theory about conjugacy classes in a subgroup $H$ of index 2 in a group $G$. For $x$ in $H$, either $x^H = x^G$, and this happens if and only if $C_G(x)$ contains an element outside of $H$; or $x^G$ splits as two conjugacy classes in $H$, and we have $x^G = x^H \cup (x^g)^H$, where $g$ is any element of $G\setminus H$. In particular this means that if $x^G$ splits in $H$ and $y = x^g$ for some $g$ in $G\setminus H$, then $y$ is not conjugate to $x$ in $H$.  

% Before the next result we need a definition. Suppose $x \in W(\tilde C_n)$ with labelled cycle type $(m,k_e,0,0)$. Writing         
%\begin{center}         
%	$x = \prod_{i=1}^m \stackrel{\stackrel{\lambda_i}{*}}{(a_i\;b_i)}\prod_{j=1}^{k_e} \stackrel{\stackrel{\mu_j}{-}}{(c_j)}$           
%\end{center}    
%
%We define $f(x)=2\sum_{i=1}^m |\lambda_i|
%+\sum_{j=1}^{k_e} |\mu_j|$. 

\begin{thm}
	\label{tbnconj}
The involutions of $\tbn$ are those involutions in $\tcn$ with labelled cycle type $(m,k_e,k_o,l)$ where $k_o$ is even. Involution conjugacy classes are parameterised by labelled cycle type, with one conjugacy class for each type except in the case where $k_o = 0$ and $l=0$. In this case there are two conjugacy classes for each type, and two elements $x$, $x'$ with the same labelled cycle type $(m,k_e,0,0)$ are conjugate if and only if $f(x) \equiv f(x') \mod{4}$.   
\end{thm}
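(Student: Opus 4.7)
The plan is to exploit that $\tbn$ is an index-$2$ subgroup of $\tcn$. By Theorem \ref{conjclass}, two involutions are conjugate in $\tcn$ iff they have the same labelled cycle type, and any such $\tcn$-class either remains a single class in $\tbn$ or splits into two $\tbn$-classes of equal size; the former happens precisely when $C_{\tcn}(x) \not\subseteq \tbn$. So the proof reduces to determining, for each labelled cycle type, whether there is a centraliser of a representative $x$ outside $\tbn$, and in the splitting case exhibiting a complete invariant.

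I begin by characterising which involutions lie in $\tbn$. Computing the contributions to $\sum \bv$ cycle by cycle, positive transpositions contribute $0$, negative transpositions contribute $2\lambda$, negative $1$-cycles contribute their label $\lambda$, and positive $1$-cycles contribute $0$. So $\sum \bv \equiv k_o \pmod 2$, giving $x \in \tbn$ iff $k_o$ is even. The non-splitting cases are then handled by exhibiting centralisers of $x$ outside $\tbn$ directly: if $l \geq 1$, using a positive fixed point $d$, the translation $(1, e_d)$ centralises $x$ and has coordinate sum $1$; if $k_o \geq 2$, using a negative $1$-cycle at $c$ with odd label $\lambda$, the element $(s_c, \lambda e_c)$ centralises $x$ (a short direct calculation) and has coordinate sum $\lambda$, which is odd.

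The crux is the remaining case $k_o = 0$ and $l = 0$. Here I would establish the key lemma that for any involution $x \in \tcn$ with this labelled cycle type and any $g \in \tcn$,
\[
 f(x^g) \equiv f(x) + 2\chi(g) \pmod{4},
\]
where $\chi(g) := \sum g \bmod 2$ indicates the $\tbn$-coset of $g$. Since $\chi$ is a group homomorphism and labelled cycle type is preserved by $\tcn$-conjugation, the claim decomposes multiplicatively, so it suffices to verify the lemma on the Coxeter generators $r_1, \ldots, r_{n+1}$ of $\tcn$. For $r_i$ with $2 \leq i \leq n$, conjugation swaps positions $i-1$ and $i$ and leaves $f$ unchanged. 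For $r_1$, a case analysis on the cycle containing position $1$ (necessarily a transposition or a negative $1$-cycle with even label, since $k_o = 0$ and $l = 0$) shows that the change in $f$ is always a multiple of $4$. For the affine reflection $r_{n+1}$, the parallel case analysis on position $n$ shows that $r_{n+1}$ always shifts $f$ by $2 \bmod 4$: a transposition through $n$ has its sign type flipped and its associated labels shifted by $1$, while a negative $1$-cycle at $n$ with even label $\lambda$ has its label changed to $2-\lambda$.

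Granted the lemma, $g \in C_{\tcn}(x)$ forces $2\chi(g) \equiv 0 \pmod 4$, so $g \in \tbn$; hence $C_{\tcn}(x) \subseteq \tbn$ and the class splits into exactly two $\tbn$-classes. Specialising the lemma to $g \in \tbn$ shows that $f \bmod 4$ is a $\tbn$-conjugacy invariant, and starting from the canonical representative $a_{m,k_e,0,0}$ (for which $f = 0$) one exhibits an element with $f \equiv 2 \pmod 4$ by changing a single transposition label from $0$ to $1$, or a single negative $1$-cycle label from $0$ to $2$; so the two classes are distinguished precisely by $f \bmod 4$. The main obstacle I anticipate is the bookkeeping in the case analysis for $r_{n+1}$: this affine reflection simultaneously alters the cycle structure at $n$ and the entry $v_n$, so one must carefully track both changes in each subcase to confirm that $f$ always shifts by $2 \bmod 4$; once this is done, the rest of the argument is formal.
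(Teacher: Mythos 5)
Your proposal is correct, and although it shares the paper's overall architecture --- the index-$2$ centraliser criterion for the non-splitting cases and $f \bmod 4$ as the distinguishing invariant when $k_o = l = 0$ --- it handles the crucial splitting case by a genuinely different mechanism. The paper first proves there are exactly two classes by computing $C_{\tcn}(a)$ for the single representative $a = (\sigma, \mathbf{0})$ directly from the conjugation formula \eqref{eqconj}, and then, separately, establishes the $f \bmod 4$ criterion in two stages: conjugation by elements $(g,\mathbf{0})$ of the finite part preserves $f$ modulo $4$, which reduces both elements to a normal form, after which an explicit translation $(1,\bw)$ conjugates one normal form to the other with $\sum \bw \equiv \tfrac{1}{2}\left(f(\bar x') - f(\bar x)\right) \bmod 2$. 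Your single cocycle identity $f(x^g) \equiv f(x) + 2\chi(g) \bmod 4$, verified on Coxeter generators and propagated multiplicatively (legitimate, since labelled cycle type is preserved by Theorem \ref{conjclass} and $\chi$ is a homomorphism with kernel $\tbn$), delivers both conclusions at once: $C_{\tcn}(x) \leq \tbn$, hence exactly two classes, and the invariance and completeness of $f \bmod 4$. The generator computations you defer do work out: conjugating by $r_{n+1} = (\overset{\overset{1}{-}}{n})$ turns $(\overset{\overset{\lambda}{+}}{a\, n})$ into $(\overset{\overset{\lambda+1}{-}}{a\, n})$, turns $(\overset{\overset{\lambda}{-}}{a\, n})$ into $(\overset{\overset{\lambda-1}{+}}{a\, n})$, and turns $(\overset{\overset{\mu}{-}}{n})$ into $(\overset{\overset{2-\mu}{-}}{n})$, each shifting $f$ by $2$ modulo $4$ precisely because $\mu$ is even; the $r_1$ cases shift $f$ by multiples of $4$ for the same parity reason, and this is also where the mod-$4$ well-definedness of $f$ under the choice of $a_i$ versus $b_i$ (recorded in Notation \ref{notation}) is needed. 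Your route costs a generator-by-generator verification but buys a cleaner logical structure in which the class count and the invariant are not argued separately; the paper's route avoids any appeal to generators at the price of two separate explicit computations. Your minor variant for the non-splitting case with $l \geq 1$ (the translation $(1,e_d)$ rather than the paper's reflection $(\overset{\overset{\lambda}{-}}{d})$ with $\lambda$ odd) is equally valid.
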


\begin{proof}
	Let $x$ be an involution in $\tcn$ with labelled cycle type $(m,k_e,k_o,l)$. Transpositions of $x$ are of the form $(\overset{\overset{\lambda}{+}\; \overset{-\lambda}{+}}{a \;\, b})$ or   $(\overset{\overset{\lambda}{-}\; \overset{\lambda}{-}}{a \; b})$, each of which contributes an even number (zero or $2\lambda$) to $\sum x$. Therefore $\sum x \equiv k_o \mod{2}$, and so $x \in \tbn$ precisely when $k_o$ is even. Suppose this occurs, and that either $l>0$ or $k_o>0$. Then $x$ has a 1-cycle of the form either $(\overset{\overset{0}{+}}{a})$ or $(\overset{\overset{\lambda}{-}}{a})$ for some odd $\lambda$. But then $(\overset{\overset{\lambda}{-}}{a}) \in C_{\tcn}(x) \setminus \tbn$. Therefore $x^{\tbn} = x^{\tcn}$, which in particular means that the conjugacy class of $x$ consists of all involutions with the same labelled cycle type as $x$. It remains to deal with involutions of labelled cycle type $(m,k_e,0,0)$. We will first show that there are two such classes in $\tbn$. Let 
	$\sigma  = (\overset{+}{1\, 2})\cdots (\overset{+}{2m\! -\! 1 \;\, 2m})(\overset{-}{2m+1})\cdots (\overset{-}{2n})$ and $a =  (\sigma, \bo)$. Now $(g,\bw) \in C_{\tcn}(a)$ if and only if $(\sigma,\bo)^{(g,\bw)} = (\sigma,\bo)$. Using Equation \eqref{eqconj} we have
	\begin{align*}
 (\sigma, \bo) &=	(\sigma,\bo)^{(g,\bw)} = (g^{-1}\sigma g, \bo  + \bw - \bw^{g^{-1}\sigma g})\\
 &= (\sigma, (w_1, \ldots, w_n) - (w_1, \ldots, w_n)^\sigma)\\
 &= (\sigma, (w_1 - w_2, w_2 - w_1, \ldots, w_{2m-1} - w_{2m}, w_{2m} - w_{2m-1}, 2w_{2m+1}, \ldots, 2w_{n})).
	\end{align*}
Hence $(g,\bw)$ centralises $a$ if and only if $w_{2i-1} = w_{2i}$ for $1 \leq i \leq m$, and $w_{2m+1} = \cdots = w_n = 0$. Thus $\sum \bw$ is even, which means $C_{\tcn}(a) \leq \tbn$. This means there are two conjugacy classes in $\tbn$ for each labelled cycle type $(m,k_e,0,0)$.\\

	 Suppose $x$ and $x'$ have the same labelled cycle type $(m,k_e,0,0)$. The action of the underlying $W(B_n)$ permutes coordinates and changes signs, but does not change the modulus of coordinates. For example a $\lambda$ over a transposition of $x$ could become $-\lambda$ over a different transposition; a $\mu$ over a 1-cycle could become $-\mu$, but since $\mu$ is even and the calculation of $f(x)$ involves $2\lambda$ rather than $\lambda$, the value of  $f(x^{(g,\bf0)})$ is congruent modulo 4 to $f(x)$ for any $(g,\textbf{0} )\in W(\tilde B_n)$.
Therefore $x$ is conjugate in $\tbn$ to some $\bar x$ given by $ \bar{x}= \prod_{i=1}^m \stackrel{\stackrel{\lambda_i}{+}}{(2i-1\;2i)}\prod_{j=1}^{k_e} \stackrel{\stackrel{\mu_j}{-}}{(2m+j)}$, with $f(\bar x) \equiv f(x) \mod{4}$, and $x'$ is conjugate in $\tbn$ to some $\bar x'$ given by 	$ \bar{x}'= \prod_{i=1}^m \stackrel{\stackrel{\lambda'_i}{+}}{(2i-1\;2i)}\prod_{j=1}^{k_e} \stackrel{\stackrel{\mu'_j}{-}}{(2m+j)}$ with $f(\bar x') \equiv f(x') \mod{4}$. 
It is now sufficient to prove that  $\bar{x}' \in \bar {x}^{W(\tilde B_n)}$ if and only if  $f(\bar {x}')\equiv f(\bar {x}) \mod 4$.\\
		
		Consider the vector $\bw=(w_1,...,w_n)$ given by
		$w_{2i-1}=\lambda'_i$,  $w_{2i}=\lambda_i$,
		$w_{2m+j}=  \frac{1}{2}(\mu'_j-\mu_j)$
		for $1\leq i \leq m$, $1\leq j \leq k_e$. Putting $\bar{x}=(\bar{\sigma}, \bar{\bu})$ we get
		\begin{align*}
		\bar{x}^{(1,\bw)}&= (\bar{\sigma}, \bar{\bu} + \bw-\bw^{\bar{\sigma}})\\
		&= (\bar{\sigma},  \bar{\bu} +(w_1-w_2, w_2-w_1,...,w_{2m-1}-w_{2m},w_{2m}-w_{2m-1},2w_{2m+1},...,2w_n ))\\
		&=( \bar{\sigma}, (\lambda_1,-\lambda_1,\lambda_2,-\lambda_2,...,\lambda_m,-\lambda_m,\mu_1,..., \mu_{k_e})\\  &+(\lambda'_1-\lambda_1,\lambda_1-\lambda'_1,...,\lambda_m,-\lambda'_m,\mu'_1-\mu_1,..., \mu'_{k_e}- \mu_{k_e})\\
		&=(\bar{\sigma}, (\lambda'_1,-\lambda'_1,...,\lambda'_m,-\lambda'_m,\mu'_1,..., \mu'_{k_e}))\\
		&=\bar{x}'.
		\end{align*}
		
		Since there are exactly two conjugacy classes with labelled cycle type $(m,k_e,0,0)$, two elements $x, x'$ with this labelled cycle type will be  conjugate in $\tbn$ if and only if any element of $\tcn$ which conjugates $x$ to $x'$ is contained in $\tbn$.\\
		We have that $\bar{x}'
		\in \bar{x}^{W(\tilde B_n)}$ if and only if $(1,\bw)\in W(\tilde B_n)$ which is if and only if $\sum \bw$ is even. Now 
		\begin{align*}
		\sum_{i=1}^n w_i&= \sum_{i=1}^m  (\lambda_i+\lambda'_i) + \sum_{j=1}^{k_e}\frac{1}{2}(\mu'_j-\mu_j)\\
		& \equiv -\left(\sum_{i=1}^m  \lambda_i+\frac{1}{2}\sum_{j=1}^{k_e}\mu_j\right) + \left(\sum_{i=1}^{m}\lambda'_i+\frac{1}{2}\sum_{j=1}^{k_e}\mu'_j\right) \mod{2}\\
		&= \textstyle\frac{1}{2}\left(f(\bar x') - f(\bar x)\right) \mod{2}.
		\end{align*}
		So  $\bar{x}'
		\in \bar{x}^{W(\tilde B_n)}$  if and only if 
$\frac{1}{2}\left( f(\bar x') - f(\bar x)\right) \equiv 0 \mod{2}$
		which is if and only if $f(\bar x')\equiv f(\bar x)\mod 4$. Thus  ${x}'
		\in {x}^{W(\tilde B_n)}$ if and only if
		$f(x')\equiv f(x)\mod 4$.  
	\end{proof}

\begin{lemma}
	Let $x \in \tcn$ be an involution and let $\omega$ be the automorphism of $\tcn$ corresponding to the non-trivial automorphism of the Coxeter graph for $\tcn$. If the labelled cycle form of $x$ contains $(\cdots \overset{\overset{\lambda}{+}}{i}\cdots)$, then $\omega(x)$ contains $(\cdots \overset{\overset{-\lambda}{+}}{n\! + \! 1 \! - \! i}\cdots)$; if the labelled cycle form of $x$ contains $(\cdots \overset{\overset{\lambda}{-}}{i}\cdots)$, then $\omega(x)$ contains  $(\cdots \overset{\overset{1-\lambda}{-}}{n\! + \! 1 \! - \! i}\cdots)$. In particular, $\omega$ maps labelled cycle type $(m,k_e,k_o,l)$ to labelled cycle type $(m,k_o,k_e,l)$. \label{omega}
\end{lemma}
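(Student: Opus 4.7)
My plan is to realize $\omega$ explicitly as an automorphism of $\tcn$ and then compute its effect on each labelled cycle separately, using the homomorphism property to assemble the general statement. The outer automorphism $\omega$ is not inner in $\tcn$, but it lifts to conjugation by a suitable element of the extended affine Weyl group $W(C_n)\ltimes P^\vee$. I claim one may take this element to be $(\pi,\mathbf{c})$, where $\pi\in W(C_n)$ is the signed permutation $\pi(e_j)=-e_{n+1-j}$ and $\mathbf{c}=\tfrac12(e_1+\cdots+e_n)$ is the fundamental coweight $\omega_1^\vee$. Via~\eqref{eqconj} this gives
\[
\omega(\sigma,\bv) \;=\; \bigl(\pi^{-1}\sigma\pi,\; \pi(\bv)+\mathbf{c}-(\pi^{-1}\sigma\pi)(\mathbf{c})\bigr).
\]
The translation part lies in $\zz^n$ because $\mathbf{c}-\tau(\mathbf{c})\in\{0,1\}^n$ for every $\tau\in W(C_n)$, so this is a well-defined automorphism of $\tcn$. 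To identify it with $\omega$, I would check that it sends each simple reflection $r_i$ to $r_{n+2-i}$, which reduces to a short case analysis over the four shapes of simple reflection ($r_1$, $r_{n+1}$, and the adjacent positive transpositions).

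Given the formula, each building block of the labelled cycle form of an involution can be handled by direct computation. For a positive transposition $(\overset{\overset{\lambda}{+}}{a\;b})=(\sigma,\lambda(e_a-e_b))$, the conjugate $\pi^{-1}\sigma\pi$ is the positive transposition on positions $n+1-a,n+1-b$ (hence fixes $\mathbf{c}$, so the correction term vanishes), and the vector transforms to $-\lambda e_{n+1-a}+\lambda e_{n+1-b}$, giving $(\overset{\overset{-\lambda}{+}}{n+1-a\;n+1-b})$. For a negative $1$-cycle $(\overset{\overset{\lambda}{-}}{a})$, the $\pi(\bv)$ contribution is $-\lambda e_{n+1-a}$ while the correction term contributes exactly $e_{n+1-a}$, producing $(\overset{\overset{1-\lambda}{-}}{n+1-a})$. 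The negative transposition $(\overset{\overset{\lambda}{-}}{a\;b})$ is analogous and yields $(\overset{\overset{1-\lambda}{-}}{n+1-a\;n+1-b})$, which agrees with the lemma on each of its two constituent $-$-labels; fixed points are obviously unaffected beyond their position shift. Since every involution in $\tcn$ is a product of commuting cycles of these shapes and $\omega$ is a homomorphism, these pointwise calculations combine to prove the lemma. The resulting labelled cycle type transformation $(m,k_e,k_o,l)\mapsto(m,k_o,k_e,l)$ is then immediate: $m$ and $l$ are unaffected by reindexing positions, and $\lambda\mapsto 1-\lambda$ swaps even and odd residues.

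The main obstacle in this approach is pinning down the correct lift $(\pi,\mathbf{c})$. The naive reversal $\pi'(e_j)=e_{n+1-j}$ combined with $\mathbf{c}=\tfrac12\mathbf{1}$ correctly sends $r_1$ to $r_{n+1}$, but fails to return $r_{n+1}$ to $r_1$: it produces $(\sigma_1,2e_1)$, the affine reflection in the hyperplane $v_1=1$. One must include the sign change in $\pi(e_j)=-e_{n+1-j}$ (equivalent to composing the reversal with the longest element of $W(C_n)$) to obtain a genuine order-two automorphism realising the full diagram symmetry. Once this is identified, everything else reduces to the direct computations above.
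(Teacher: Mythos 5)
Your proof is correct, but it takes a genuinely different route from the paper's. The paper works entirely inside $\tcn$: it only uses the defining property $\omega(r_i)=r_{n+2-i}$, decomposes each labelled cycle of an involution into a product of conjugates of the generators (e.g.\ $\overset{\overset{\lambda}{-}}{(i)} = (\overset{\overset{0}{+}}{i\,n})\big[(\overset{\overset{0}{-}}{n})(\overset{\overset{\lambda}{+}}{n})\big](\overset{\overset{0}{+}}{i\,n})$), and pushes $\omega$ through each factor via a table. You instead realise $\omega$ as conjugation by the explicit element $(\pi,\mathbf{c})$ of the extended affine Weyl group, verify on the simple reflections that this is the diagram automorphism (correctly noting the subtlety that the unsigned reversal fails and the extra sign is needed), and then read off the image of each cycle type from the closed formula $\omega(\sigma,\bv)=(\pi\sigma\pi,\ \pi(\bv)+\mathbf{c}-\pi\sigma\pi(\mathbf{c}))$; I have checked that each of your cycle computations (the vanishing correction for positive transpositions, the $+e_{n+1-a}$ shift for negative $1$-cycles, and the $(1-\lambda)$ label for negative transpositions) reproduces the paper's table. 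Your approach is more conceptual and yields the formula for $\omega$ on all of $\tcn$ at once (from which the identities \eqref{omegaprops} also drop out immediately), at the cost of stepping outside $\tcn$ into the extended group and having to justify that conjugation by $(\pi,\mathbf{c})$ preserves $\tcn$ and agrees with $\omega$ on generators; the paper's argument is more pedestrian but self-contained within the group and its labelled cycle calculus. The only place where you should be a little more explicit is the generator check you defer to ``a short case analysis'': since identifying the conjugation with $\omega$ is the crux of your method, that verification (in particular that $r_{n+1}=(\overset{\overset{1}{-}}{n})$ maps back to $r_1$ and not to a translate of it) should be written out rather than merely asserted.
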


\begin{proof}
By definition, $\omega(r_i) = r_{n+2-i}$ for $1 \leq i \leq n+1$. Thus $\omega$ interchanges $\overset{\overset{0}{-}}{(1)}$ and $\overset{\overset{1}{-}}{(n)}$, and maps $(\overset{\overset{0}{+}}{i} \; \overset{\overset{0}{+}}{i\! +\! 1})$ to $(\overset{\overset{0}{+}}{n\! +\! 1\! -\! i} \;\; \overset{\overset{0}{+}}{n\! -\! i})$. We quickly see that $\omega$ also maps any $(\overset{\overset{0}{+}}{i} \; \overset{\overset{0}{+}}{j})$ to $(\overset{\overset{0}{+}}{n\! +\! 1\! -\! i} \;\; \overset{\overset{0}{+}}{n\! +\! 1\! -\! j})$ The following table builds up the various images of involution cycles $\sigma$ under $\omega$.

\begin{center}
	\begin{tabular}{c|c|c|c} \hline
		$\sigma$ & Decomposition & Image of Decomposition & $\omega (\sigma)$	\\ \hline
$\overset{\overset{0}{-}}{(n)}$	& $(\overset{\overset{0}{+}}{1 \; n})(\overset{\overset{0}{-}}{1})(\overset{\overset{0}{+}}{1 \; n})$ & $(\overset{\overset{0}{+}}{n \; 1})\overset{\overset{1}{-}}{(n)}(\overset{\overset{0}{+}}{n \; 1})$ & $(\overset{\overset{1}{-}}{1})$ \\
%%%%%%%%%%%%%%%%%%%%%%%%%%%%
$\overset{\overset{\lambda}{+}}{(i)}$	& $(\overset{\overset{0}{+}}{i \; n})\big[(\overset{\overset{0}{-}}{n})(\overset{\overset{1}{-}}{n})\big]^{\lambda}(\overset{\overset{0}{+}}{i \; n})$ & $(\overset{\overset{0}{+}}{n\! + \! 1 \! - \! i \;\; 1})\big[(\overset{\overset{1}{-}}{1})(\overset{\overset{0}{-}}{1})\big]^{\lambda}(\overset{\overset{0}{+}}{n\! + \! 1 \! - \! i \;\; 1})$ & $(\overset{\overset{-\lambda}{+}}{n\! + \! 1 \! - \! i})$ \\
%%%%%%%%%%%%%%%%%%%%%%%%%%%%%%%%
$\overset{\overset{\lambda}{-}}{(i)}$	& $(\overset{\overset{0}{+}}{i \; n})\big[(\overset{\overset{0}{-}}{n})(\overset{\overset{\lambda}{+}}{n})\big](\overset{\overset{0}{+}}{i \; n})$ & $(\overset{\overset{0}{+}}{n\! + \! 1 \! - \! i \;\; 1})\big[(\overset{\overset{1}{-}}{1})(\overset{\overset{-\lambda}{+}}{1})\big](\overset{\overset{0}{+}}{n\! + \! 1 \! - \! i \;\; 1})$ & $(\overset{\overset{1-\lambda}{-}}{n\! + \! 1 \! - \! i})$ \\
%%%%%%%%%%%%%%%%%%%%%%%%%%%%%%%%%%%
 $(\overset{\overset{\lambda}{+}}{i \; j})$ & $(\overset{\overset{0}{+}}{i \; j})(\overset{\overset{\lambda}{+}}{i})(\overset{\overset{-\lambda}{+}}{j})$ & $(\overset{\overset{0}{+}}{n\! + \! 1 \! - \! i \;\; n\! + \! 1 \! - \! j})(\overset{\overset{-\lambda}{+}}{n\! + \! 1 \! - \! i})(\overset{\overset{\lambda}{+}}{n\! + \! 1 \! - \! j})$ & $(\overset{\overset{-\lambda}{+}}{n\! + \! 1 \! - \! i \;\; n\! + \! 1 \! - \! j})$\\
 %%%%%%%%%%%%%%%%%%%%%%%%%%%%%%%%%%%
 $(\overset{\overset{\lambda}{-}}{i \; j})$ & $(\overset{\overset{0}{+}}{i \; j})(\overset{\overset{\lambda}{-}}{i})(\overset{\overset{\lambda}{-}}{j})$ & $(\overset{\overset{0}{+}}{n\! + \! 1 \! - \! i \;\; n\! + \! 1 \! - \! j})(\overset{\overset{1-\lambda}{-}}{n\! + \! 1 \! - \! i})(\overset{\overset{1-\lambda}{-}}{n\! + \! 1 \! - \! j})$ & $(\overset{\overset{1-\lambda}{-}}{n\! + \! 1 \! - \! i \;\; n\! + \! 1 \! - \! j})$\\ \hline
 
\end{tabular}
\end{center}
The result now follows. 
\end{proof}

An immediate consequence of Lemma~\ref{omega} is that for any involution $x$ of $\tcn$, then as long as we do not rearrange the order in which numbers are written in cycles, we have \begin{align} \minus(\omega(x)) &= \minus(x); \nonumber\\ 
\textstyle\sum \omega(x) &= \minus(x) - \textstyle\sum x; \label{omegaprops}\\
f(\omega(x)) &= \minus(x) - f(x).\nonumber\end{align}

As an example, if $x = (\overset{\overset{1}{+}}{1\, 2})(\overset{\overset{3}{-}}{3\, 4})(\overset{\overset{2}{-}}{5})(\overset{\overset{4}{-}}{6})(\overset{\overset{0}{+}}{7})$, then $\omega(x) = (\overset{\overset{-1}{+}}{7\, 6})(\overset{\overset{-2}{-}}{5\, 4})(\overset{\overset{-1}{-}}{3})(\overset{\overset{-3}{-}}{2})(\overset{\overset{0}{+}}{1})$. We have $\minus(x) = 4$,  $\sum x = 12$, $f(x) = 14$,  $\minus(\omega(x)) = 4$, $\sum \omega(x) = -8$ and $f(\omega(x)) = -10$.\\

Lemma \ref{omega} also furnishes a verification of the fact that $\barb = \{y \in \tcn: \minus(y) \equiv \sum y \mod{2}\}$. To see this, we note that $\barb = \omega(\tbn)$. Thus every $y$ in $\barb$ is $\omega(x)$ for some $x$ such that $\sum x \equiv 0 \mod{2}$. Therefore $$\textstyle\sum y = \minus(x) - \textstyle\sum x = \minus(y) - \textstyle\sum x \equiv \minus(y) \mod{2}.$$

\begin{thm}
	\label{barbconj}
	The involutions of $\barb$ are those involutions in $\tcn$ with labelled cycle type $(m,k_e,k_o,l)$ where $k_e$ is even.  Involution conjugacy classes are parameterised by labelled cycle type, with one conjugacy class for each labelled cycle type $(m,k_e,k_o,l)$ except in the case where $k_e = 0$ and $l=0$; in this case there are two conjugacy classes for each type, and two elements $y$ and $y'$ with the same labelled cycle type $(m,0,k_o,0)$ are conjugate if and only if $f(y) + \minus(y) \equiv f(y') + \minus(y') \mod{4}$.   
\end{thm}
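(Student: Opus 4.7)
The plan is to leverage the automorphism $\omega$ of $\tcn$ from Lemma \ref{omega}, which interchanges $\tbn$ and $\barb$, in order to transfer Theorem \ref{tbnconj} to $\barb$ with essentially no new work.

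First I would identify the involutions of $\barb$. Using the characterisation $\barb = \{w \in \tcn : \sum w \equiv \minus(w) \bmod 2\}$ established just above the statement, an elementary calculation shows that for an involution $y$ of labelled cycle type $(m,k_e,k_o,l)$ we have $\sum y \equiv k_o \bmod 2$ (since transpositions contribute even coordinate sums, negative $1$-cycles with even label contribute even sums, and negative $1$-cycles with odd label contribute odd sums), while $\minus(y) = 2m + k_e + k_o$. So the condition $\sum y \equiv \minus(y) \bmod 2$ reduces to $k_e \equiv 0 \bmod 2$, giving the first claim.

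Next I would exploit that $\omega$ is an automorphism of $\tcn$ (of order two, induced by the nontrivial graph symmetry) whose restriction is a bijection $\tbn \to \barb$, and which therefore induces a bijection on conjugacy classes. By Lemma \ref{omega}, $\omega$ sends labelled cycle type $(m,k_e,k_o,l)$ to $(m,k_o,k_e,l)$, so the exceptional type $(m,k_e,0,0)$ for $\tbn$ in Theorem \ref{tbnconj} corresponds precisely to the exceptional type $(m,0,k_o,0)$ in $\barb$, and the non-exceptional types match up accordingly. Theorem \ref{tbnconj} then immediately yields a single conjugacy class in $\barb$ for each non-exceptional labelled cycle type and two conjugacy classes for each exceptional one.

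The only remaining step is translating the conjugacy criterion $f(x)\equiv f(x') \bmod 4$ into the language of $\barb$. Given $y, y' \in \barb$ of labelled cycle type $(m,0,k_o,0)$, set $x = \omega(y)$ and $x' = \omega(y')$, both in $\tbn$ of labelled cycle type $(m,k_o,0,0)$. Then $y$ and $y'$ are $\barb$-conjugate iff $x$ and $x'$ are $\tbn$-conjugate iff $f(x) \equiv f(x') \bmod 4$. Using the identity $f(\omega(y)) = \minus(y) - f(y)$ from \eqref{omegaprops}, this becomes $\minus(y) - f(y) \equiv \minus(y') - f(y') \bmod 4$. Since $y$ and $y'$ share the same labelled cycle type we have $\minus(y) = \minus(y') = 2m + k_o$, so the condition rearranges to $f(y) + \minus(y) \equiv f(y') + \minus(y') \bmod 4$, as required. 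The principal (though mild) obstacle is being careful with signs when transporting $f$ through $\omega$; the fact that $\minus$ is constant on a labelled cycle type is what allows the asymmetric-looking identity to be written in the symmetric form stated.
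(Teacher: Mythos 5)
Your proposal follows essentially the same route as the paper: both transfer Theorem \ref{tbnconj} to $\barb$ via the graph automorphism $\omega$ and the identity $f(\omega(x)) = \minus(x) - f(x)$, so no comparison of substance is needed. One small correction: $\minus(y)$ is \emph{not} equal to $2m + k_e + k_o$, and in particular is not constant on a labelled cycle type, because a transposition contributes $0$ or $2$ minus signs depending on whether it is positive or negative (e.g.\ $(\overset{\overset{0}{+}}{1\,2})(\overset{\overset{0}{+}}{3\,4})(\overset{\overset{1}{-}}{5})$ and $(\overset{\overset{0}{-}}{1\,2})(\overset{\overset{0}{-}}{3\,4})(\overset{\overset{1}{-}}{5})$ share the type $(2,0,1,0)$ but have $\minus$ equal to $1$ and $5$). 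What is true, and all your final rearrangement requires, is that $\minus(y) \equiv k_e + k_o \equiv \minus(y') \mod{2}$ for $y, y'$ of the same labelled cycle type, since then $2\minus(y) \equiv 2\minus(y') \mod{4}$ and the passage from $\minus - f$ to $\minus + f$ goes through.
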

\begin{proof} Since $\barb = \omega (\tbn)$, the involutions of $\barb$ are the elements $y=\omega(x)$ where $x$ is an involution of $\tbn$. By Theorem \ref{tbnconj} the involutions of $\tbn$ are precisely the involutions of $\tcn$ with cycle type $(m,k_e,k_o, l)$ where $k_o$ is even. Thus, by Lemma \ref{omega}, the involutions of $\barb$ are precisely those involutions of $\tcn$ with cycle type $(m,k_e,k_o,l)$ where $k_e$ is even. Next we must determine the conjugacy classes. Since $\omega$ maps conjugacy classes to conjugacy classes, Theorem \ref{tbnconj} and Lemma \ref{omega} immediately show that involution conjugacy classes in $\barb$ are parameterised by labelled cycle type with one conjugacy class for each labelled cycle type except where $k_e = 0$ and $l=0$, when there are two classes. Suppose $y$ and $y'$ in $\barb$ have the same labelled cycle type $(m,0,k,0)$. Then $y=\omega(x)$ and $y' = \omega(x')$, for appropriate $x,x' \in \tbn$ of labelled cycle type $(m,k,0,0)$. 
Now $x$ and $x'$ are conjugate in $\tbn$ if and only if $f(x) \equiv f(x') \mod{4}$. By Equation \eqref{omegaprops}, this is if and only if $\minus(x) - f(\omega(x)) \equiv \minus(x') - f(\omega(x'))  \mod{4}$, which is if and only if $\minus(\omega(x)) - f(\omega(x)) \equiv \minus(\omega(x')) - f(\omega(x'))  \mod{4}$. Since $\minus(x)$ and $\minus(x')$ are both even in this case, we get that $y$ is conjugate to $y'$ if and only if 
$f(y) + \minus(y) \equiv f(y') + \minus(y') \mod{4}$.
%For appropriate integers $\lambda_i$ and even integers $\mu_j$ we can write $$x = \prod_{i=1}^q(\overset{\overset{\lambda_i}{-}}{a_i\;\, b_i})\prod_{i=q+1}^m(\overset{\overset{\lambda_i}{+}}{a_i\;\, b_i})\prod_{j=1}^{k}(\overset{\overset{\mu_j}{-}}{c_i}).$$
%We have that $f(x) = 2\sum_{i=1}^{m} \lambda_i + \sum_{j=1}^{k} \mu_j$, and that $\minus(x) = k + 2q$.  Now, setting $a_i' = n+1-a_i$, $b_i' = n+1-b_i$ and $c_j' = n+1-c_j$ we have $$\omega(x)  =  \prod_{i=1}^q(\overset{\overset{1-\lambda_i}{-}}{a'_i \;\, b_i'})\prod_{i=q+1}^m(\overset{\overset{-\lambda_i}{+}}{a'_i\;\, b_i'})\prod_{j=1}^{k}(\overset{\overset{1-\mu_j}{-}}{c'_i}).$$
%Here, remembering that $k$ and each $\mu_j$ are even, we get \begin{align*}
%f(\omega(x)) &= \sum_{i=1}^q 2(1-\lambda_i) + \sum_{i=q+1}^m 2(\lambda_i) + \sum_{j=1}^{k} (1-\mu_j)\\
%&\equiv \left( 2q - 2\sum_{i=1}^m \lambda_i\right) + \left(k - \sum_{j=1}^{k} \mu_j\right) \mod{4}\\
%&\equiv \left( 2q + 2\sum_{i=1}^m \lambda_i\right) + \left(k + \sum_{j=1}^{k} \mu_j\right) \mod{4}\\
%&= 2q+k + f(x) \mod{4}\\
%\end{align*}
%Now $\minus(\omega(x)) = \minus(x) = 2q+k$, so $\minus(\omega(x)) \equiv -\minus(x) \mod{4}$, as $k$ is even. Now $x$ is conjugate to $x'$ in $\tbn$ if and only if $f(x) \equiv f(x') \mod{4}$. Therefore $\omega(x)$ is conjugate to $\omega(x')$ in $\barb$ if and only if $f(\omega(x)) + \minus(\omega(x)) \equiv f(\omega(x')) + \minus(\omega(x')) \mod{4}$. 
\end{proof}

\begin{thm}
	\label{tdnconj}
	In $\tdn$ involution conjugacy classes are parameterised by labelled cycle type, with either one, two or four conjugacy classes for each labelled cycle type. Given an element $x$ of $\tdn$ with labelled cycle type $(m,k_e,k_o,l)$, both $k_e$ and $k_o$ must be even, and either $x^{\tdn} = x^{\tbn}$, or $x^{\tdn} = x^{\barb}$ (or both) except in the case where $k_e = k_o = l =  0$ and $n=2m$. Here there are four conjugacy classes and if $x$ and $y$ both have cycle type $(m,0,0,0)$, then $x$ is conjugate to $y$ if and only if both $\minus(x) \equiv \minus(y) \mod{4}$ and $f(x) \equiv f(y) \mod{4}$.  
\end{thm}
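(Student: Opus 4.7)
The plan is to apply the standard index-$2$ conjugacy rule, using $\tdn = \tbn \cap \barb$ as an index-$2$ subgroup of both $\tbn$ and $\barb$, and to leverage the conjugacy characterisations already established in Theorems~\ref{tbnconj} and \ref{barbconj}. First I would verify the parity statement: for any involution $x$ of labelled cycle type $(m,k_e,k_o,l)$, one checks $\sum x \equiv k_o \bmod 2$ and $\minus(x) \equiv k_e + k_o \bmod 2$, so requiring $x$ to lie in both $\tbn$ (forcing $k_o$ even) and $\barb$ (forcing $k_e$ even, given the preceding) yields the first claim. Since the number of $\tdn$-classes contained in a given $\tcn$-class depends only on the labelled cycle type, I may work throughout with the standard representative $a = a_{m,k_e,k_o,l}$.

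For each non-exceptional labelled cycle type the aim is to exhibit an element of $\tbn\setminus\tdn$ (or $\barb\setminus\tdn$) that centralises $a$. If $l>0$ and $d$ is any fixed point of $a$, the involutions $(\overset{\overset{0}{-}}{d})$ and $(\overset{\overset{1}{-}}{d})$ centralise $a$ and lie respectively in $\tbn\setminus\tdn$ and $\barb\setminus\tdn$, so both $a^{\tdn}=a^{\tbn}$ and $a^{\tdn}=a^{\barb}$; combined with Theorem~\ref{tbnconj} this yields a single $\tdn$-class. If $l=0$ and $k_e\geq 2$, the element $(\overset{\overset{0}{-}}{c})$ at a $k_e$-type index $c$ centralises $a$ (using $v_c=0$ in $a$) and lies in $\tbn\setminus\tdn$, so $a^{\tdn}=a^{\tbn}$; by Theorem~\ref{tbnconj} this gives one class when $k_o>0$ and two when $k_o=0$. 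Symmetrically, if $l=0$ and $k_o\geq 2$, then $(\overset{\overset{1}{-}}{c})$ at a $k_o$-type index $c$ centralises $a$ and lies in $\barb\setminus\tdn$, so $a^{\tdn}=a^{\barb}$, producing one or two classes by Theorem~\ref{barbconj}. Each centralisation is a routine application of Equation~\eqref{eqconj}.

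The exceptional case $k_e=k_o=l=0$ (so $n=2m$) requires a more careful analysis. Here $a = \prod_{i=1}^m(\overset{\overset{0}{+}}{2i-1\,\,2i})$, and Equation~\eqref{eqconj} shows that $(g,\bw)\in C_{\tcn}(a)$ iff $g\in C_{W(B_n)}(\sigma)$ and $w_{2i-1}=w_{2i}$ for all $i$. The second condition makes $\sum\bw$ even; for the first, any $g$ centralising $\sigma=\prod(\overset{+}{2i-1\,\,2i})$ must preserve the partition of $\{1,\ldots,n\}$ into pairs $\{2i-1,2i\}$ with the two entries of each block always carrying the same sign, so $\minus(g)$ is even. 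Hence $C_{\tcn}(a)\subseteq\tdn$ and $a^{\tcn}$ splits into $|\tcn:\tdn|=4$ distinct $\tdn$-classes. To identify them I would apply Theorems~\ref{tbnconj} and \ref{barbconj}: two elements $x,y$ of this labelled cycle type are $\tbn$-conjugate iff $f(x)\equiv f(y)\bmod 4$ and $\barb$-conjugate iff $f(x)+\minus(x)\equiv f(y)+\minus(y)\bmod 4$. Since $\tdn$-conjugacy implies both, $\tdn$-conjugate elements must agree on the pair $(f\bmod 4,\minus\bmod 4)$. The converse follows from a coset argument in $\tcn/\tdn\cong\zz_2\times\zz_2$: each intersection of a $\tbn$-class with a $\barb$-class inside $a^{\tcn}$ is a single $\tdn$-class, so two elements in the same $\tbn$-class and the same $\barb$-class are $\tdn$-conjugate.

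The main obstacle is the exceptional case: one must confirm that the centraliser in $W(B_n)$ of the fixed-point-free involution $\prod(\overset{+}{2i-1\,\,2i})$ sits inside $W(D_n)$, and then verify that the two invariants $f\bmod 4$ and $\minus\bmod 4$ independently take both values in $\{0,2\}$ on $a^{\tcn}$ (using small representatives such as $(\overset{\overset{1}{+}}{1\,2})$ or $(\overset{\overset{0}{-}}{1\,2})$ in the place of one transposition), so that the pair genuinely indexes exactly the four $\tdn$-classes.
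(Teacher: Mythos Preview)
Your proposal is correct and follows essentially the same route as the paper: exhibit a centralising element of the form $(\overset{\overset{\lambda}{-}}{a})$ lying in $\tbn\setminus\tdn$ or $\barb\setminus\tdn$ whenever some $1$-cycle is present, and then handle the exceptional type $(m,0,0,0)$ via the invariants from Theorems~\ref{tbnconj} and~\ref{barbconj}. The only difference is in the exceptional case: the paper argues ``at least four classes'' from the necessary conditions and ``at most four'' from the index, whereas you compute $C_{\tcn}(a)\leq\tdn$ directly (checking both $\sum\bw$ and $\minus(g)$ are even) and then use the coset bijection $\tcn/\tdn\to(\tcn/\tbn)\times(\tcn/\barb)$; both arguments are sound and yield the same conclusion.
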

\begin{proof}
	Suppose $x \in \tdn$ with labelled cycle type $(m,k_e,k_o,l)$. If any of $k_e$, $k_o$ or $l$ is nonzero, then $x$ has a cycle $(\overset{\overset{0}{+}}{a})$ or $(\overset{\overset{\lambda}{-}}{a})$ for some $\lambda$. But then $(\overset{\overset{\lambda}{-}}{a})$ is contained either in $C_{\tbn}(x) \setminus \tdn$ or $C_{\barb}(x)\setminus \tdn$. Therefore either $x^{\tdn} = x^{\tbn}$ or $x^{\tdn} = x^{\barb}$, or both. The only remaining case is where $m=2n$. If $x$ and $y$ both have cycle type $(m,0,0,0)$ then in order for them to be conjugate in $\tdn$ they must be conjugate both in $\tbn$ and $\barb$, which means that both $f(x) \equiv f(y) \mod{4}$ and  $\minus(x) \equiv \minus(y) \mod{4}$. Hence there are at least four conjugacy classes of this type in $\tdn$, but there are also at most four, because $\tdn$ has index 2 in both $\tbn$ and $\barb$. Hence $x$ is conjugate to $y$ if and only if both $\minus(x) \equiv \minus(y) \mod{4}$ and $f(x) \equiv f(y) \mod{4}$.  
\end{proof}

\section{Connectedness and Diameter Results}

 We begin with an easy observation about  connectedness which allows us in certain cases to infer properties about commuting involution graphs in affine groups from properties of the corresponding graphs in their finite counterparts. For an element $g=( \sigma,\bv)$ in a conjugacy class $X$ of $\tilde W$, we define $ \hat g= \sigma$. Then let $ \hat X$ be the conjugacy class of $\hat g$ in $W$. Clearly if $g,h  \in X$, then $\hat g,\hat h \in \hat X$.     
                    
                    \begin{lemma} \label{easy} Suppose $g, h \in X$. If $d(\hat g,
                    	\hat h) = k$, then $d(g,h) \geq k$. If $\C (W,\hat X)$  is disconnected,
                    	then $\C (\tilde W, X)$ is disconnected.
                    \end{lemma}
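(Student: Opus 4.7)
The plan is to exploit the natural projection $\pi\colon \tilde W \to W$ given by $(\sigma,\bv)\mapsto \sigma$, which is a group homomorphism since $\tilde W = W \ltimes L$. The key observation is that any homomorphism respects commuting: if $g$ and $h$ commute in $\tilde W$, then $\hat g = \pi(g)$ and $\hat h = \pi(h)$ commute in $W$. So $\pi$ almost gives a graph homomorphism from $\C(\tilde W, X)$ to $\C(W,\hat X)$, with the only nuisance being that distinct vertices $g,h$ may project to the same element $\hat g = \hat h$.

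For the first statement, I would take a shortest path $g = g_0, g_1, \ldots, g_k = h$ in $\C(\tilde W, X)$. Since consecutive $g_i, g_{i+1}$ commute in $\tilde W$, their images $\hat g_i, \hat g_{i+1}$ commute in $W$; they lie in $\hat X$ since $\pi(X) \subseteq \hat X$ as conjugate elements map to conjugate elements. The sequence $\hat g_0, \ldots, \hat g_k$ is therefore a walk in $\C(W,\hat X)$ of length $k$ (once one deletes any immediate repetitions caused by $\hat g_i = \hat g_{i+1}$, which only shortens it). Hence $d(\hat g, \hat h) \le k = d(g,h)$, giving the inequality $d(g,h) \ge d(\hat g,\hat h)$.

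For the second statement, suppose $\C(W,\hat X)$ is disconnected, so there exist $\hat x, \hat y \in \hat X$ with $d(\hat x,\hat y) = \infty$. Here I need the small lifting fact that $\pi(X) = \hat X$: given $\hat x \in \hat X$, write $\hat x = h^{-1}\hat g h$ for some $h\in W$; then conjugating any element $g = (\hat g, \bv) \in X$ by $(h,\bo) \in \tilde W$ yields an element of $X$ whose image under $\pi$ is $\hat x$. Lifting both $\hat x$ and $\hat y$ to elements $g, h \in X$, the first part gives $d(g,h) \ge d(\hat x, \hat y) = \infty$, so $g$ and $h$ lie in different components of $\C(\tilde W, X)$, which is therefore disconnected.

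There is really no hard step here; the only thing to be careful about is the minor bookkeeping about repeated vertices when projecting the path, and the surjectivity $\pi(X) = \hat X$ used to lift disconnecting vertices back up to $X$. The lemma is essentially the statement that a group homomorphism induces a (loop-permitting) graph homomorphism on commuting graphs of corresponding conjugacy classes, specialised to the projection $\tilde W \to W$.
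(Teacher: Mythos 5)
Your proposal is correct and rests on exactly the same observation the paper uses, namely that the projection $(\sigma,\bv)\mapsto\sigma$ is a homomorphism and hence preserves commuting, so paths in $\C(\tilde W,X)$ project to walks in $\C(W,\hat X)$. The paper states this in one line and calls the rest immediate; your additional care about repeated vertices and about the surjectivity $\pi(X)=\hat X$ (needed to lift a disconnecting pair) just makes explicit what the paper leaves implicit.
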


                    \begin{proof} The result follows immediately from the observation that if $g$ commutes with $h$ in $G$, then $\hat g$ commutes with $\hat h$ in $W$. 
                    \end{proof}     

The aim of this section is to prove the following result.
\begin{thm}\label{bbdresults} Let $G$ be one of $\tcn$, $\tbn$ or $\tdn$
Let $X$ be a conjugacy class of $G$ with labelled cycle type $(m,k_e,k_o,l)$. Then $\graph$ is disconnected in each of the following cases.
\begin{trivlist}
	\item{(i)} $m=0$ and $l=0$;
	\item{(ii)} $m>0$, $l=0$ and either $k_e = 1$ or $k_o = 1$;
	\item{(iii)} $m>0$ and $\max\{k_e,k_o,l\} = 1$;
	\item{(iv)} $n=4$ and $m=1$;
	\item{(v)} $n=6$, $m=1$, $k_o = k_e = 2$.
\end{trivlist}
In all other cases, $\graph$ is connected with diameter at most $n+2$.
\end{thm}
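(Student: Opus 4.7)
The plan is to reduce as much as possible to the $\tcn$ bound established in~\cite{amal} and to attack only the residual split-class situations directly. The key observation is that whenever a conjugacy class $X$ of $G \in \{\tbn, \barb, \tdn\}$ coincides with a single $\tcn$-conjugacy class --- the non-split situation classified by Theorems~\ref{tbnconj}--\ref{tdnconj} --- we have $\C(G, X) = \C(\tcn, X)$, and both the diameter bound and the disconnection behaviour carry over verbatim from \cite{amal}. The new work is therefore concentrated on the split classes: in $\tbn$ (resp.\ $\barb$) these are exactly the classes of labelled cycle type $(m, k_e, 0, 0)$ (resp.\ $(m, 0, k_o, 0)$), and in $\tdn$ they are the same plus the four-class type $(m, 0, 0, 0)$ with $n = 2m$. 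The automorphism $\omega$ of Lemma~\ref{omega} lets me transfer every $\tbn$ argument to $\barb$, and the containment $\tdn \le \tbn \cap \barb$ reduces most $\tdn$ cases to the $\tbn$/$\barb$ analysis, leaving only the four-class type for genuinely $\tdn$-specific treatment.

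For the disconnectedness assertions: case (iv) and the subcases of (ii) and (iii) whose finite projection $(m, k_e + k_o, l)$ satisfies $\max(k_e + k_o, l) \le 1$ follow from Lemma~\ref{easy} together with Theorem~\ref{finite}(iii), (v). Case (i) is dispatched by direct computation: every element of $X$ has underlying signed permutation $\sigma_0 = -I$, and $(\sigma_0, \bv)(\sigma_0, \bu) = (\sigma_0, \bu)(\sigma_0, \bv)$ forces $2(\bv - \bu) = \bo$, hence $\bv = \bu$, so $\C(G, X)$ is edgeless on at least two vertices. The remaining disconnections --- case (v), the $k_e = k_o = 1$ subcase of (iii), and the subcases of (ii) with $\min(k_e, k_o) = 1$ and $\max(k_e, k_o) \ge 2$ --- will be handled by exhibiting a $\zz/4\zz$-valued invariant on $X$ (essentially a refinement of $f \bmod 4$ and $\minus \bmod 4$ restricted to the constrained parity class of $1$-cycles), verifying that it is constant along every edge of $\C(G, X)$, and showing that it takes more than one value on $X$.

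For the diameter bound, let $a = a_{m, k_e, k_o, l}$ be the canonical representative of $X$ from Theorem~\ref{conjclass}, refined to the correct split class via Theorems~\ref{tbnconj}--\ref{tdnconj}. The goal is to show $d(x, a) \le \lceil (n+2)/2 \rceil$ for every $x \in X$, which gives $\diam \C(G, X) \le n + 2$ by the triangle inequality. I would build the path from $x$ to $a$ in two phases. Phase one adjusts the underlying signed permutation in a bounded number of commuting-involution steps to match the cycle shape of $a$, adapting the $W(B_n)/W(D_n)$ path constructions of~\cite{Finite} but choosing each intermediate vector so that every $x_i$ lies in the correct split class. Phase two then corrects $\bv$ cycle by cycle: for each of the $m + k_e + k_o + l \le n$ cycles of the current element, I exhibit a commuting involution in $X$ differing only in the label on that cycle, by an increment compatible with the split-class invariant. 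Coupling pairs of single-cycle moves reduces the phase-two count to roughly $\lceil n/2 \rceil$, keeping the total bounded by $\lceil (n+2)/2 \rceil$.

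The main obstacle throughout is the mod-$4$ invariant forced by split classes: a single-cycle label change typically shifts $f$ by $\pm 2 \bmod 4$ (and, in $\tdn$, may also shift $\minus$), so it is not available as a commuting move within $X$ and must be coupled with a compensating adjustment on a second cycle of compatible parity type. When no suitable second cycle exists the coupling fails, and this is exactly the mechanism producing the disconnections in cases (ii), (iii), (iv) and (v). Organising the coupled-move bookkeeping so that the edge count remains $n + 2$ rather than doubling to $2n$ is the delicate part of the proof, and I expect the low-rank boundary cases --- in particular $n = 4$ in (iv) and $n = 6$ in (v), together with the four-class $\tdn$-type $(m, 0, 0, 0)$ with $n = 2m$ --- to need separate hand verification.
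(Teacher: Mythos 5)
Your overall reduction is the same as the paper's: non-split classes are inherited verbatim from $\tcn$, $\omega$ transports $\tbn$ to $\barb$, and $\tdn$ reduces to $\tbn$ and $\barb$ except for type $(m,0,0,0)$. On the disconnectedness side your plan is sound but partly redundant: every subcase you reserve for the new $\zz/4\zz$ edge-invariant is in fact a non-split class already covered by your own first reduction. In case (v) and in the subcases of (ii) with $\min(k_e,k_o)=1$ and $\max(k_e,k_o)\geq 2$ one has $k_o>0$ (or, in $\barb$, $k_e>0$), so $X=X^{\tcn}$ by Theorems \ref{tbnconj}--\ref{tdnconj} and the $\tcn$ result applies directly; and $k_e=k_o=1$ cannot occur in these groups since one of $k_e,k_o$ must be even. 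The genuinely split disconnected case, $(m,1,0,0)$ with $l=0$, is the one your invariant would \emph{not} naturally capture (the components are indexed by the frozen even-labelled negative $1$-cycle, of which there are infinitely many); the paper instead uses the local obstruction of Lemma \ref{2cycle}, though your route via Lemma \ref{easy} and Theorem \ref{finite}(iii) also works there since the finite projection has $t=1$.

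The genuine gap is the diameter argument for the split classes. First, a small arithmetic point: for odd $n$ the triangle inequality turns a radius bound of $\lceil(n+2)/2\rceil$ into $n+3$, not $n+2$. Much more seriously, the radius bound you are aiming for is far stronger than anything the evidence supports: the paper's own path constructions for type $(1,k_e,0,0)$ already require up to $6$ steps to reach the canonical element in the base case $n=5$ (where $\lceil(n+2)/2\rceil=4$), and its general bounds on $d(x,a)$ are $n+1$ and $n-1$, not $n/2$. Your ``phase one'' cannot be a bounded number of steps independent of the labels, because Lemmas \ref{2cycle} and \ref{doubletrans} impose linear conditions on the labels at every commuting step; matching the cycle shape while remaining inside the split class is exactly where the paper spends its inductions on $n$ and $k_e$. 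The observation you are missing, and which makes the halving unnecessary, is that $\graph$ is vertex-transitive ($G$ acts by conjugation on the conjugacy class $X$ preserving edges), so $\diam\graph=\max_{x\in X}d(x,a)$ for a fixed canonical $a$; a bound of $n+2$ on $d(x,a)$ therefore suffices. With that in hand the substantive content is the three split-class propositions the paper proves --- type $(m,0,0,0)$ with diameter at most $3$ or $4$ (Proposition \ref{n=2m}), type $(1,k_e,0,0)$ by induction on $n$ with a hand-checked case $n=5$ giving $n+1$ (Proposition \ref{m=1B}), and type $(m,k_e,0,0)$ with $m>1$ by induction on $k_e$ and $m$ giving $n-1$ (Proposition \ref{ko=l=0}) --- none of which your outline actually supplies.
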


 The next three results from \cite{amal} give criteria under which labelled 1-cycles, transpositions and double transpositions commute. They
will be used repeatedly in the proofs to follow.
\begin{lemma}[Lemma 2.7 of \cite{amal}]\label{1cycle}
	Let $\alpha \in \{1, \ldots, n\}$ and $\lambda, \mu \in \zz$. Then $\stackrel{\stackrel{\lambda}{-}}{(\alpha)}$ commutes with $\stackrel{\stackrel{0}{+}}{(\alpha)}$ for all $\lambda$, whereas
	$\stackrel{\stackrel{\lambda}{-}}{(\alpha)}$ commutes with  $\stackrel{\stackrel{\mu}{-}}{(\alpha)}$ if and only if $\lambda = \mu$. 
\end{lemma}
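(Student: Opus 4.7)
The plan is to prove both assertions by explicit calculation in $\tilde W$, after identifying each labelled $1$-cycle with a concrete element of the form $(\sigma,\bv)$. The cycle $\stackrel{\stackrel{\lambda}{-}}{(\alpha)}$ corresponds to $(s_\alpha,\lambda e_\alpha)$, where $s_\alpha\in W$ is the sign change on coordinate $\alpha$ (i.e.\ the element that negates $e_\alpha$ and fixes every other basis vector). The cycle $\stackrel{\stackrel{0}{+}}{(\alpha)}$ corresponds to $(1,\bo)$, the identity of $\tilde W$ — which is exactly why the paper's convention permits such cycles to be omitted from labelled cycle form.

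The first assertion is then immediate: since $\stackrel{\stackrel{0}{+}}{(\alpha)}=(1,\bo)$ is the identity of $\tilde W$, it commutes with every element, and in particular with $\stackrel{\stackrel{\lambda}{-}}{(\alpha)}$ for every $\lambda\in\zz$.

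For the second assertion, I would apply the multiplication rule $(\sigma,\bv)(\tau,\bu)=(\sigma\tau,\bv^\tau+\bu)$ recorded earlier in the paper. Since $s_\alpha$ is an involution that negates $e_\alpha$, we have $(\lambda e_\alpha)^{s_\alpha}=-\lambda e_\alpha$, and hence
\[
(s_\alpha,\lambda e_\alpha)(s_\alpha,\mu e_\alpha)=\bigl(s_\alpha^{2},\,-\lambda e_\alpha+\mu e_\alpha\bigr)=\bigl(1,(\mu-\lambda)e_\alpha\bigr),
\]
and symmetrically $(s_\alpha,\mu e_\alpha)(s_\alpha,\lambda e_\alpha)=(1,(\lambda-\mu)e_\alpha)$. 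These two elements of $\tilde W$ coincide precisely when $(\mu-\lambda)e_\alpha=(\lambda-\mu)e_\alpha$, that is, when $2(\lambda-\mu)=0$ in $\zz$, which is equivalent to $\lambda=\mu$.

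The proof therefore reduces to one line of semidirect-product arithmetic and there is no real obstacle; the only minor thing to keep straight is the convention for the right action $\bv^\tau$, but because $s_\alpha=s_\alpha^{-1}$ any potential ambiguity between $\tau(\bv)$ and $\tau^{-1}(\bv)$ collapses, so the computation is unambiguous either way.
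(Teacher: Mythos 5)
Your proof is correct, and it is the natural direct computation: the paper itself does not reprove this lemma (it is quoted as Lemma 2.7 of \cite{amal}), and the argument there is exactly this one-line semidirect-product calculation, with $\stackrel{\stackrel{0}{+}}{(\alpha)}$ acting as the identity on coordinate $\alpha$ so that the first assertion is trivial. Your observation that $s_\alpha=s_\alpha^{-1}$ removes any ambiguity in the convention for $\bv^\tau$ is also the right point to flag, and the conclusion $2(\lambda-\mu)=0\iff\lambda=\mu$ is valid since $\zz$ is torsion-free.
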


\begin{lemma}[Lemma 2.8 of \cite{amal}] \label{2cycle} Let $\alpha, \beta$ be distinct elements of  $\{1, \ldots, n\}$  and let $\lambda$, $\mu$ and $\nu$ be integers. 
	\begin{trivlist}
		\item[(\textit{i})]  $\overset{\overset{\lambda}{+}}{(\alpha\hspace{0.1cm} \beta)}$ and
		$\overset{\overset{\mu}{+}}{(\alpha\hspace{0.1cm} \beta)}$ commute if and only if $\lambda = \mu$, and $\overset{\overset{\lambda}{-}}{(\alpha\hspace{0.1cm} \beta)}$ and
		$\overset{\overset{\mu}{-}}{(\alpha\hspace{0.1cm} \beta)}$ commute if and only if $\lambda = \mu$. But $\overset{\overset{\lambda}{+}}{(\alpha\hspace{0.1cm} \beta)}$ and
		$\overset{\overset{\mu}{-}}{(\alpha\hspace{0.1cm} \beta)}$ commute for all $\lambda$ and $\mu$.
		\item[(\textit{ii})]
		$\overset{\overset{\lambda}{\pm}}{(\alpha\hspace{0.1cm} \beta)}$ and  $\overset{\overset{0}{+}}{(\alpha)}
		\overset{\overset{0}{+}}{(\beta)}$ commute for all $\lambda$, but there is no value of $\mu$ or $\lambda$ for which  $\overset{\overset{\lambda}{\pm}}{(\alpha\hspace{0.1cm} \beta)}$ and  $\overset{\overset{0}{+}}{(\alpha)}
		\overset{\overset{\mu}{-}}{(\beta)}$ or $\overset{\overset{\mu}-}{(\alpha)}
		\overset{\overset{0}{+}}{(\beta)}$ commute.
		\item[(\textit{iii})]
		$\overset{\overset{\lambda}
			{+}}{(\alpha\hspace{0.1cm} \beta)}$ and $\overset{\overset{\mu}{-}}{(\alpha)}
		\overset{\overset{\nu}{-}}{(\beta)}$ commute if and only if $\mu-\nu=2\lambda$, whereas 
		$\overset{\overset{\lambda}{-}}{(\alpha\hspace{0.1cm} \beta)}$ and  $\overset{\overset{\mu}{-}}{(\alpha)}
		\overset{\overset{\nu}{-}}{(\beta)}$ commute if and only if $\mu+\nu=2\lambda$.   
	\end{trivlist}  
	
\end{lemma}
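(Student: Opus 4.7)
The plan is to prove the disconnected cases and the connected cases separately, using Lemma~\ref{easy} wherever possible to inherit information from the finite Weyl groups via Theorem~\ref{finite}, and the commutation criteria of Lemmas~\ref{1cycle} and \ref{2cycle} throughout. In every case the conjugacy class invariant I would track is the labelled cycle type $(m,k_e,k_o,l)$, plus, where relevant, the extra invariants $\minus(x)\bmod 4$ and $f(x)\bmod 4$ coming from Theorems~\ref{tbnconj}, \ref{barbconj}, \ref{tdnconj} — both of which are preserved under $G$-conjugation and hence constant on $X$.

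For the disconnected cases I would argue as follows. In case~(i), every element of $X$ is a product of $n$ negative $1$-cycles, so by Lemma~\ref{1cycle} two such involutions $x,y$ commute only when all their labels agree, i.e.\ $x=y$; hence $\C(G,X)$ has no edges and is disconnected whenever $|X|>1$ (which always holds here). In case~(iii), $\hat X$ is a conjugacy class in the underlying finite Weyl group $W(B_n)$ or $W(D_n)$ with $\max\{k,l\}=1$, so $\C(W,\hat X)$ is disconnected by Theorem~\ref{finite}(iii), and Lemma~\ref{easy} gives disconnectedness of $\C(G,X)$. Case~(iv) is handled similarly via Theorem~\ref{finite}(v). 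Case~(ii) needs a genuinely affine argument: with exactly one odd-labelled (resp.\ even-labelled) negative $1$-cycle and no fixed points, any commuting partner must preserve that unique singleton cycle position (otherwise Lemma~\ref{2cycle}(ii) or (iii) is violated at some transposition overlapping it), and from here one shows the label on that cycle is an invariant of the connected component. Case~(v) is a small sporadic check, analogous to the finite $n=5$ exceptions: I would exhibit two specific elements in $\C(\tilde W,X)$ with $n=6,\,m=1,\,k_e=k_o=2$ and verify by direct inspection that no element of $X$ commutes with both.

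For the connected cases, the strategy is to route every pair $x,y\in X$ through a canonical representative of $X$ in at most $\lfloor (n+2)/2\rfloor$ steps each. I would first define a canonical form $a=a_{m,k_e,k_o,l}$ (as in Theorem~\ref{conjclass}, adjusted where needed for the split $\tbn$/$\tdn$ classes), and then show $d(x,a)\le \lceil (n+2)/2\rceil$ for every $x\in X$, from which the triangle inequality gives $\diam\C(G,X)\le n+2$. The reduction is done cycle-by-cycle: transpositions of $x$ that already lie among the $(2i-1,2i)$ pairs are absorbed into intermediate involutions that commute with $x$; remaining transpositions are moved one at a time using Lemma~\ref{2cycle}(ii), each move costing one step in the graph; the negative $1$-cycles are then shuffled into position using Lemma~\ref{1cycle}. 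In $\tbn$ and $\barb$, the additional constraint $\sum\bw\in 2\zz$ (resp.\ $\sum\bw\equiv\minus\pmod 2$) on the conjugating element must be enforced at each step, but the commuting intermediates I pick will be products of fixed-points, negative $1$-cycles with matched labels, and positive transpositions, which satisfy the required parity. For $\tdn$ one further uses the fact that, by Theorem~\ref{tdnconj}, every $\tdn$-class is either an entire $\tbn$- or $\barb$-class (outside the sporadic $k_e=k_o=l=0$, $n=2m$ situation, which is handled using both graphs simultaneously).

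The main obstacle is the bookkeeping of invariants when passing to intermediate involutions. It is easy to write down a short path in $\tcn$ that ends at the right labelled cycle type, but the intermediate elements must lie in the same conjugacy class $X$ of $G$, which in the split cases requires $f$ and $\minus$ to take the correct value modulo $4$ at \emph{every} step. Controlling these residues while simultaneously keeping the path length bounded by $n+2$ is the delicate part; I expect to handle it by choosing each intermediate so that any unwanted change in $f$ or $\minus$ introduced at one step is cancelled by a paired adjustment at a later step, together with a careful base-case analysis for the small $n$ values (in particular $n\le 6$) where the budget of $n+2$ is tight.
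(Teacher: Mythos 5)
Your proposal does not prove the statement it was meant to prove. The statement is Lemma~\ref{2cycle}, a purely local commutation criterion for labelled $2$-cycles and pairs of labelled $1$-cycles on a common support $\{\alpha,\beta\}$; what you have written is instead an outline of Theorem~\ref{bbdresults}, the global connectedness-and-diameter theorem (your cases (i)--(v), the canonical representative $a_{m,k_e,k_o,l}$, the $f$ and $\minus$ invariants modulo $4$, and the $n+2$ bound all belong to that theorem, not to this lemma). Worse, your sketch is circular as an argument for Lemma~\ref{2cycle}: you state at the outset that you will use ``the commutation criteria of Lemmas~\ref{1cycle} and~\ref{2cycle} throughout,'' i.e.\ you assume the very statement under consideration as a tool. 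Nothing in your text establishes any of the three parts of the lemma, e.g.\ why $\overset{\overset{\lambda}{+}}{(\alpha\,\beta)}$ and $\overset{\overset{\mu}{-}}{(\alpha)}\overset{\overset{\nu}{-}}{(\beta)}$ commute exactly when $\mu-\nu=2\lambda$.

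What is actually required is a short direct computation in the semidirect product, which is how the source (Lemma 2.8 of \cite{amal}, quoted here without reproof) handles it. Write each element in the form $(\sigma,\bv)$ and use the multiplication rule $(\sigma,\bv)(\tau,\bu)=(\sigma\tau,\bv^{\tau}+\bu)$. For instance, in part (\textit{iii}) one has $g=\bigl((\overset{+}{\alpha}\,\overset{+}{\beta}),\,\lambda e_{\alpha}-\lambda e_{\beta}\bigr)$ and $h=\bigl((\overset{-}{\alpha})(\overset{-}{\beta}),\,\mu e_{\alpha}+\nu e_{\beta}\bigr)$; the underlying signed permutations commute, so $gh=hg$ reduces to equality of the translation parts, namely $(\lambda e_{\alpha}-\lambda e_{\beta})^{(\overset{-}{\alpha})(\overset{-}{\beta})}+\mu e_{\alpha}+\nu e_{\beta} = (\mu e_{\alpha}+\nu e_{\beta})^{(\overset{+}{\alpha}\,\overset{+}{\beta})}+\lambda e_{\alpha}-\lambda e_{\beta}$, i.e.\ $(\mu-\lambda)e_{\alpha}+(\nu+\lambda)e_{\beta}=(\nu+\lambda)e_{\alpha}+(\mu-\lambda)e_{\beta}$, which holds precisely when $\mu-\nu=2\lambda$. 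The remaining cases of parts (\textit{i})--(\textit{iii}) are identical checks (in part (\textit{ii}) the failure for $\overset{\overset{0}{+}}{(\alpha)}\overset{\overset{\mu}{-}}{(\beta)}$ is already visible at the level of the underlying signed permutations, which do not commute with $(\overset{\pm}{\alpha}\,\overset{\pm}{\beta})$). None of the machinery in your proposal is relevant to, or substitutes for, this verification.
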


\begin{lemma}[Lemma 2.9 of \cite{amal}]\label{doubletrans} Let 
	$g_1=\overset{\overset{\lambda_1}{+}}{(\alpha\beta)}\overset{\overset{\lambda_2}{+}}{(\gamma\delta)}$, $g_2=\overset{\overset{\lambda_1}{+}}{(\alpha\beta)}\overset{\overset{\lambda_2}
		{-}}{(\gamma\delta)}$, $g_3 = \overset{\overset{\lambda_1}{-}}{(\alpha\beta)}\overset{\overset{\lambda_2}
		{-}}{(\gamma\delta)}$, $h_1=\overset{\overset{\mu_1}{+}}{(\alpha \gamma)}\overset{\overset{\mu_2}{+}}{(\beta\delta)}$, $h_2=\overset{\overset{\mu_1}{+}}{(\alpha \gamma)}\overset{\overset{\mu_2}{-}}{(\beta\delta )}$
	and 
	$h_3=\overset{\overset{\mu_1}{-}}{(\alpha \gamma)}\overset{\overset{\mu_2}{-}}{(\beta\delta )}$,   for distinct $\alpha,
	\beta, \gamma, \delta$ in $\{1, \ldots, n\}$ and integers
	$\lambda_i, \mu_i$. Then
	
	\begin{trivlist}
		\item[(i)] $g_1$ commutes with $h_1$ if and only if $\mu_1 -\lambda_1 =
		\mu_2-\lambda_2$;  \item[(ii)] $g_1$ does not commute with $h_2$; \item[(iii)] $g_1$ commutes with $h_3$ if and only if $\mu_1-\lambda_1 = \mu_2 +\lambda_2$; \item[(iv)]  $g_2$ commutes with $h_2$  if and only if $\mu_1 -\lambda_1 = \mu_2-\lambda_2$; 
		\item[(v)] $g_2$ does not commute with $h_3$;
		\item[(vi)] $g_3$ commutes with $h_3$ if and only if $\mu_1 - \lambda_1 = \lambda_2 - \mu_2$.
	\end{trivlist}
\end{lemma}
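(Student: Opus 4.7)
The criterion for commutation in $\tilde W$ is that two elements $(\sigma,\bv)$ and $(\tau,\bu)$ commute exactly when their underlying signed permutations $\sigma,\tau$ commute in $W$ and the vector identity $\bv^\tau+\bu=\bu^\sigma+\bv$ holds in the coroot lattice. My plan is to treat the permutation condition and the vector condition separately; since each $g_i$ and $h_j$ is supported on $\{\alpha,\beta,\gamma,\delta\}$, all computations localise to this 4-element set.

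For the two non-commuting cases (ii) and (v), I would exhibit a single basis vector on which the two signed-permutation parts disagree. For (ii), tracing $e_\alpha$ through the signed permutations $\pi_{g_1},\pi_{h_2}$ underlying $g_1$ and $h_2$ yields $\pi_{g_1}\pi_{h_2}(e_\alpha)=e_\delta$ whereas $\pi_{h_2}\pi_{g_1}(e_\alpha)=-e_\delta$, the sign coming from the negative cycle $(\overset{-}{\beta}\overset{-}{\delta})$ in $h_2$; an entirely analogous computation with $g_2,h_3$ handles (v). In both cases the permutation parts already fail to commute, so the labelled cycles cannot commute regardless of the choice of $\lambda_i,\mu_i$.

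For cases (i), (iii), (iv) and (vi), I would first verify by direct action on $e_\alpha,e_\beta,e_\gamma,e_\delta$ that the signed permutations of $g_i$ and $h_j$ do commute; they generate a Klein 4-subgroup of $W(B_n)$ determined by the support and the sign pattern. I would then read off the vectors $\bv,\bu$ from the labelled cycle convention of Lemma \ref{invn}: a transposition $(\overset{\overset{\lambda}{+}}{\alpha\,\beta})$ contributes $v_\alpha=\lambda,\ v_\beta=-\lambda$, while $(\overset{\overset{\lambda}{-}}{\alpha\,\beta})$ contributes $v_\alpha=v_\beta=\lambda$. Evaluating $\bv^\tau+\bu$ and $\bu^\sigma+\bv$ at each of the four coordinates $\alpha,\beta,\gamma,\delta$ (both vectors vanish elsewhere) yields four scalar equations. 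Because $\sigma$ and $\tau$ are involutions whose supports pair up these coordinates, the four equations collapse into pairs and reduce to a single linear relation among $\lambda_1,\lambda_2,\mu_1,\mu_2$: namely $\mu_1-\lambda_1=\mu_2-\lambda_2$ in (i) and (iv), $\mu_1-\lambda_1=\mu_2+\lambda_2$ in (iii), and $\mu_1-\lambda_1=\lambda_2-\mu_2$ in (vi).

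The whole argument is essentially sign bookkeeping; the only genuine obstacle is keeping track of the factors of $-1$ introduced when $\sigma$ or $\tau$ acts on a coordinate that lies in one of its negative cycles, together with the $-\lambda$ that appears in the ``$b$-coordinate'' of a positive transposition. I would organise the computation as a small case table indexed by the pair $(g_i,h_j)$, recording for each case the action of $\sigma$ and $\tau$ at the four coordinates and the resulting four scalar equations, which makes the final linear relation in each case immediate to read off.
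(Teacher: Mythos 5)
Your proposal is correct: the commutation criterion in the semidirect product --- $(\sigma,\bv)$ and $(\tau,\bu)$ commute iff $\sigma\tau=\tau\sigma$ and $\bv^\tau+\bu=\bu^\sigma+\bv$ --- together with the labelled-cycle dictionary ($v_\alpha=\lambda$, $v_\beta=-\lambda$ for a positive transposition, $v_\alpha=v_\beta=\lambda$ for a negative one) does reduce each case to the stated linear relation, and the sign clashes you exhibit on $e_\alpha$ correctly rule out cases (ii) and (v) at the level of the underlying signed permutations. The paper under review only quotes this lemma from \cite{amal} without reproducing its proof, and your direct coordinate-by-coordinate verification is essentially the same computation by which it is established there.
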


\begin{prop}
	\label{n=2m} Let $X$ be a conjugacy class of involutions with labelled cycle type $(m,0,0,0)$ in $G$, where $G$ is either $\tbn$, $\barb$ or $\tdn$. 
	\begin{trivlist}
		\item[(\textit{i})]    
		If $G$ is $\tbn$ or $\barb$, then $\diam \C(G,X) \leq 3$.
		\item[(\textit{ii})] 
		If $G$ is $\tdn$, then $\diam \C(G,X) \leq 4$. 
		
	\end{trivlist}  
\end{prop}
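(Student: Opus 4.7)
The plan is to construct, for arbitrary $x, y \in X$, an explicit short path in $\C(G,X)$ using what I will call the \emph{sign-flip construction}. Given $x = \prod_{i=1}^m (\overset{\overset{\lambda_i}{\epsilon_i}}{a_i \, b_i}) \in X$, define $\tilde x = \prod_{i=1}^m (\overset{\overset{\mu_i}{-\epsilon_i}}{a_i \, b_i})$ on the same pair partition but with every sign flipped, leaving the labels $\mu_i$ to be chosen. By Lemma~\ref{2cycle}(i), opposite-sign transpositions on the same support commute for any labels, so $\tilde x$ commutes with $x$ regardless of the $\mu_i$. Define $\tilde y$ analogously for $y$. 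When $\tilde x, \tilde y \in X$ and $\tilde x, \tilde y$ commute, this produces a path $x \to \tilde x \to \tilde y \to y$ of length $3$.

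For part~(i), since elements of $X$ contain no $1$-cycles, $f(\tilde x) = 2 \sum_i \mu_i$, so the condition $\tilde x \in X$ (via Theorems~\ref{tbnconj} and~\ref{barbconj}) reduces to a single parity condition on $\sum_i \mu_i$, trivially satisfiable because $m \geq 2$ (as $n = 2m \geq 4$). The commutativity of $\tilde x$ and $\tilde y$ decomposes, via Lemmas~\ref{2cycle} and~\ref{doubletrans}, into linear equations on the label sums or differences $\mu_i \pm \nu_j$ arising from each pair of transpositions: disjoint supports always commute; equal supports need label-matching only when the signs agree; partially overlapping supports impose an explicit linear constraint via Lemma~\ref{doubletrans}. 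The freedom in the $\mu_i$ and $\nu_j$ should always suffice to solve these equations simultaneously with the class constraint, yielding $d(x, y) \leq 3$. The degenerate case $\hat x = \hat y$ is handled separately, pair by pair, to produce a path of length at most $2$.

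For part~(ii) with $G = \tdn$, the class invariant additionally demands $\minus(\tilde x) \equiv \minus(x) \pmod 4$ (Theorem~\ref{tdnconj}). A full sign-flip alters $\minus$ by $2(m_+(x) - m_-(x))$, where $m_{\pm}(x)$ denotes the number of positive/negative transpositions of $x$; this is divisible by $4$ precisely when $m$ is even. When $m$ is odd, we flip instead an even number $m - 1$ of signs, leaving one transposition fixed with its label forced to match $x$'s on that transposition. The resulting reduction in freedom may prevent $\tilde x$ and $\tilde y$ from commuting directly, so in the worst case one additional intermediate element is inserted, giving a path of length at most $4$. The main obstacle will be verifying, in part~(i), that the linear commutativity equations for $\tilde x$ and $\tilde y$ are always simultaneously solvable alongside the parity class constraint; this requires a careful case analysis of the overlap pattern between the pair partitions of $\hat x$ and $\hat y$, each overlap type contributing a distinct equation via Lemma~\ref{doubletrans}, with a separate and tighter analysis handling the constrained ``fixed'' transposition for part~(ii) when $m$ is odd.
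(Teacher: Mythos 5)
There is a genuine gap at the heart of your part (\textit{i}). Your path $x \to \tilde x \to \tilde y \to y$ requires $\tilde x$ and $\tilde y$ to commute, and you hope to arrange this by solving linear equations in the labels $\mu_i,\nu_j$. But flipping signs and adjusting labels does not change the underlying pair partition: the image of $\tilde x$ under the projection to $\sym(n)$ that forgets signs and labels is still the fixed-point-free involution determined by the pairs $\{a_i,b_i\}$ of $x$, and two elements of $\tbn$ can commute only if their projections to $\sym(n)$ commute. Already for $n=6$, $m=3$, take $\hat x = (1\,2)(3\,4)(5\,6)$ and $\hat y = (1\,3)(2\,5)(4\,6)$; these do not commute in $\sym(6)$, so no choice of signs or labels makes $\tilde x$ commute with $\tilde y$, and your length-$3$ path simply does not exist. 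Lemma~\ref{doubletrans} only governs overlaps of the ``rectangle'' shape $(\alpha\beta)(\gamma\delta)$ against $(\alpha\gamma)(\beta\delta)$; a general overlap pattern between two pair partitions yields non-commuting permutations that no labelling can repair. The same defect propagates into your part (\textit{ii}).

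The paper's proof sidesteps exactly this issue: it takes one step from $x$ to the element with the same pairs, opposite signs, and all labels zero. That element lies in the finite subgroup $W(B_n)$ (resp.\ $W(D_n)$), where the class of signed cycle type $(m,0,0)$ has commuting involution graph of diameter at most $2$ by Theorem~\ref{finite}(\textit{ii}); it is this finite-group result that bridges arbitrary pair partitions, and your label bookkeeping cannot replace it without essentially reproving it. (Your treatment of $\minus$ modulo $4$ in part (\textit{ii}) --- distinguishing $m$ even from $m$ odd and inserting one extra vertex when $m$ is odd --- does match the paper's, but the paper still lands in the finite subgroup afterwards.) To repair your argument you would need to route $\tilde x$ and $\tilde y$ through a common zero-labelled core and invoke the finite diameter bound, at which point you have the paper's proof.
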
   

\begin{proof} Let $x \in X$. Then we have $x= \prod_{i=1}^{m}
	\stackrel{\stackrel{\lambda_i}{+/-}}{(a_i\;b_i)}$ for suitable $\lambda, a_i$ and $b_i$.\\
	       
The two conjugacy classes of labelled cycle type $(m,0,0,0)$ in $\tbn$ are interchanged by conjugation with $(\overset{\overset{1}{-}}{n})$; this induces an isomorphism of their commuting involution graphs. Similarly the corresponding classes of $\barb$ are mapped to those of $\tbn$ by the action of $\omega$, again inducing isomorphisms between the commuting involution graphs. Therefore it is sufficient to consider the case where $x \in \tbn$ has labelled cycle type $(m,0,0,0)$ and $f(x) \equiv 0 \mod{4}$. That is, $X = a^{\tbn}$, where $a = \stackrel{\stackrel{0}{+}}{(1\;2)}\stackrel{\stackrel{0}{+}}{(3\;4)}\cdots \stackrel{\stackrel{0}{+}}{(n-1\;n)}$.
	We can write $x= \prod_{i=1}^{m}
	\stackrel{\stackrel{\lambda_i}{+/-}}{(a_i\;b_i)}$.   $\sum_{i=1}^{m} \lambda_i \equiv   0 \mod{2}$. Now let $y= \prod_{i=1}^{m}
	\stackrel{\stackrel{0}{-/+}}{(a_i\;b_i)} \in X$.     
	By Lemma \ref{2cycle}, $x$ commutes with  $y$, and the fact that $f(y) = 0$ implies that $y \in X$. Now $y$ and $a$ lie in a subgroup isomorphic to $W(B_n)$ and so  $d(y,a)\leq 2$ by Theorem \ref{finite}.     Hence, Diam $C(G,X)\leq 3$. \\

	Now suppose $G =W(\tilde D_n)$. In this case, by Theorem \ref{tdnconj}, $x$ is conjugate to exactly one of the following.       
	
	\begin{align*}     
		w_{1} &=  \stackrel{\stackrel{0}{+}}{(1\;2)}\stackrel{\stackrel{0}{+}}{(3\;4)}\cdots \stackrel{\stackrel{0}{+}}{(n\! - \! 3 \;\, n\! - \! 2)}\stackrel{\stackrel{0}{+}}{(n\! -\! 1 \;\, n)} = tr_4r_6\cdots r_{n-2}r_n  \\   
		w_{2} &= \stackrel{\stackrel{0}{+}}{(1\;2)}\stackrel{\stackrel{0}{+}}{(3\;4)}\cdots \stackrel{\stackrel{0}{+}}{(n\! - \! 3 \;\, n\! - \! 2)}\stackrel{\stackrel{1}{-}}{(n\! -\! 1 \;\, n)} = tr_4r_6\cdots r_{n-2}s \\ 
		w_{3} &= \stackrel{\stackrel{0}{-}}{(1\;2)}\stackrel{\stackrel{0}{+}}{(3\;4)}\cdots \stackrel{\stackrel{0}{+}}{(n\! - \! 3 \;\, n\! - \! 2)}\stackrel{\stackrel{0}{+}}{(n\! -\! 1 \;\, n)}=r_2r_4r_6\cdots r_{n-2}r_n \\
		w_{4}&=  \stackrel{\stackrel{0}{-}}{(1\;2)}\stackrel{\stackrel{0}{+}}{(3\;4)}\cdots\stackrel{\stackrel{0}{+}}{(n\! - \! 3 \;\, n\! - \! 2)} \stackrel{\stackrel{1}{-}}{(n\! -\! 1 \;\, n)}=r_2r_4r_6\cdots r_{n-2}s  
	\end{align*}      
	There are  automorphisms of the Coxeter graph of $\tdn$ mapping  each $w_i$ to any other $w_j$. Therefore, the four conjugacy classes have isomorphic commuting involution graphs. Without
	loss of generality then, assume $x$ is conjugate to $w_{1}$. This occurs if and only if $\minus(x) \equiv 0 \mod{4}$ and $f(x) \equiv 0 \mod{4}$, the latter being equivalent to $\sum_{i=1}^m \lambda_i \equiv 0 \mod{2}$.
	
	If $m$ is even, let $y= \prod_{i=1}^m \overset{\overset{0}{-/+}}{(a_i \, b_i)}$. We have $$\minus(y) \equiv 2m - \minus(x) \equiv \minus(x) \equiv 0 \mod{4}$$ and $f(y) = 0$. Therefore $y \in X$. Now $x$ commutes with $y$, and since $y$ lies in a subgroup isomorphic to $W(D_n)$ we have $d(c,w_{1}) \leq 2$ by Theorem \ref{finite}. Thus $d(x,w_{1}) \leq 3$.
	If $m$ is odd, then as $\sum_{i=1}^m \lambda_i$ is even, there must be at least one even $\lambda_i$. Without loss of generality then, we can suppose $\lambda_1$ is even. Now let $x' = \overset{\overset{\lambda_1}{+/-}}{(a_1 \, b_1)}\overset{\overset{0}{-/+}}{(a_2 \, b_2)}\prod_{i=3}^m \overset{\overset{0}{-/+}}{(a_i \, b_i)}$ and $y' =  \overset{\overset{0}{-/+}}{(a_1 \, b_1)}\overset{\overset{0}{+/-}}{(a_2 \, b_2)}\prod_{i=3}^m \overset{\overset{0}{-/+}}{(a_i \, b_i)}$. We have that $$\minus(x') \equiv 2m-2 - \minus(x) \equiv \minus(x) \equiv 0 \mod{4}$$ and $f(x') = 2\lambda_1 \equiv 0 \mod{4}$. Also note that $\minus(y') = \minus(x')$ and $f(y') = 0$. Thus $x', y' \in X$ and $x'$ commutes with both $x$ and $y'$. Moreover $d(y,w_{1}) \leq 2$. Therefore $d(x,w_{1}) \leq 4$. Hence $\diam\graph \leq 4$.
\end{proof}

In the next proof, and subsequently, we will often work inductively on the rank of the group. To do this, we use a shorthand notation for lower rank subgroups. Let $G = W(\tilde C_n)$. Then, for any subset $\{i_1, \ldots, i_t\}$ of $\{1, \ldots n\}$ there is a subgroup of $G$ isomorphic to $W(\tilde C_t)$, consisting of the labelled signed permutations of $\{i_1, \ldots, i_t\}$. We write $G_{\{i_1, \ldots, i_t\}}$ for this subgroup, and $G_{t}$ for $G_{\{1, \ldots, t\}}$. We use similar notation for $W(\tilde B_n)$ and $W(\tilde D_n)$. 

 \begin{prop}\label{m=1B}  Let $G = W(\tilde B_n)$, and let $x$ be an involution of $G$ with labelled cycle type $(1, k_e, 0, 0)$, where $k_e > 0$. If $k_e \leq 2$, then $\graph$ is disconnected. Otherwise, $\diam\graph \leq n+1$.  \end{prop}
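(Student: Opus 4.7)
Since $2m + k_e + k_o + l = n$ and the labelled cycle type is $(1, k_e, 0, 0)$, we have $n = k_e + 2$; so $k_e \in \{1, 2\}$ corresponds to $n \in \{3, 4\}$, while $k_e \geq 3$ corresponds to $n \geq 5$.

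For the two disconnected cases, the plan is to exhibit an invariant that is preserved under adjacency in $\C(G, X)$ but takes more than one value on $X$. When $k_e = 1$, any element of $X$ has the form $\overset{\overset{\lambda}{\pm}}{(a\,b)}\overset{\overset{\mu}{-}}{(c)}$, and the invariant is the position $c$ of the negative $1$-cycle: for $y \in X$ commuting with $x$, the projection $\hat y$ centralises $\hat x$ in $W(B_3)$, and since the centraliser of $(a\,b)$ in $S_3$ contains only the transposition $(a\,b)$, $y$'s underlying transposition is forced to be $(a\,b)$ and its $1$-cycle to sit at $c$. When $k_e = 2$, any element has the form $\overset{\overset{\lambda}{\pm}}{(a\,b)}\overset{\overset{\mu_1}{-}}{(c)}\overset{\overset{\mu_2}{-}}{(d)}$ and the invariant is the unordered partition $\{\{a,b\},\{c,d\}\}$; since $y \in X$ has exactly one transposition and the centraliser of $(a\,b)$ in $S_4$ contains only the transpositions $(a\,b)$ and $(c\,d)$ as single-transposition elements, $y$'s transposition support is either $\{a,b\}$ or $\{c,d\}$, preserving the partition. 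Because $X$ meets all three possible values of the invariant (three choices of $c$ in the first case, three pair-partitions of $\{1,2,3,4\}$ in the second), the graph is disconnected in both cases.

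For the connected case $k_e \geq 3$, the plan is induction on $n$, with base case $n = 5$. For the inductive step, the main technical lemma is: if $x \in X$ contains the $1$-cycle $\overset{\overset{\mu}{-}}{(p)}$ and $y$ is any element of $X$, then there exists $y' \in X$ adjacent to $y$ in $\C(G, X)$ that also contains $\overset{\overset{\mu}{-}}{(p)}$ as a $1$-cycle. I would prove this by cases on where $p$ sits in $y$, using Lemma \ref{doubletrans} to swap $y$'s transposition against two of its $1$-cycles so as to install $\overset{\overset{\mu}{-}}{(p)}$, with replacement labels chosen so that $f(y') \equiv f(y) \bmod 4$ and hence $y' \in X$ by Theorem \ref{tbnconj}. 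Granted the lemma, $x$ and $y'$ both lie in the subgraph of $\C(G, X)$ consisting of elements containing $\overset{\overset{\mu}{-}}{(p)}$, which is canonically isomorphic to $\C(G', Y)$ where $G' = G_{\{1,\ldots,n\}\setminus\{p\}} \cong W(\tilde B_{n-1})$ and $Y$ is the corresponding class in $G'$ of labelled cycle type $(1, k_e - 1, 0, 0)$. Since $k_e \geq 4$ gives $k_e - 1 \geq 3$, the inductive hypothesis yields $\diam \C(G', Y) \leq n$, whence $d(x, y) \leq 1 + n = n + 1$.

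The base case $n = 5$ (so $k_e = 3$) does not reduce to rank $4$, since the rank-$4$ graph is disconnected; I would handle it by direct analysis. Here the $1$-cycle supports of $x$ and $y$ are two $3$-element subsets of $\{1, \ldots, 5\}$ and so must overlap by pigeonhole. Using Lemmas \ref{1cycle}--\ref{doubletrans} and the $f \bmod 4$ constraint from Theorem \ref{tbnconj}, I would build explicit intermediates between $x$ and $y$ of total length at most $6 = n + 1$. The main obstacle throughout is the technical lemma in the inductive step: simultaneously placing $\overset{\overset{\mu}{-}}{(p)}$ in $y'$, preserving commutativity with $y$, and landing in the correct split conjugacy class imposes three essentially independent congruences on the replacement labels, and verifying that they can jointly be satisfied in every case is the most delicate part of the argument.
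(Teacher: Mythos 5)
Your treatment of the disconnected cases $k_e\le 2$ is correct, but note that it amounts to reproving the disconnectedness of the corresponding class in the finite group $W(B_n)$ (your invariants are exactly the statement that transposition supports are preserved, which is Lemma~\ref{2cycle}(ii)); the paper simply quotes Theorem~\ref{finite} together with Lemma~\ref{easy}.

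The connected case, however, has a genuine gap: the technical lemma on which your induction rests is false. You need, for an arbitrary $y\in X$, a neighbour $y'$ of $y$ containing the labelled $1$-cycle $\overset{\overset{\mu}{-}}{(p)}$, where $\mu$ is the label $x$ carries at $p$. If $y$ itself contains $\overset{\overset{\nu}{-}}{(p)}$ with $\nu\neq\mu$, no such $y'$ exists: by Lemma~\ref{1cycle} a negative $1$-cycle at $p$ in an element commuting with $y$ must carry the label $\nu$, and by Lemma~\ref{2cycle}(ii),(iii) the only alternative is that $p$ lies in the transposition of $y'$, in which case $y'$ has no $1$-cycle at $p$ at all. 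Concretely, for $n=6$ the elements $x = \overset{\overset{0}{+}}{(1\,2)}\overset{\overset{2}{-}}{(3)}\overset{\overset{0}{-}}{(4)}\overset{\overset{0}{-}}{(5)}\overset{\overset{2}{-}}{(6)}$ and $y = \overset{\overset{0}{+}}{(1\,2)}\overset{\overset{0}{-}}{(3)}\overset{\overset{0}{-}}{(4)}\overset{\overset{2}{-}}{(5)}\overset{\overset{2}{-}}{(6)}$ lie in the same class (both have $f\equiv 0\bmod 4$), yet no neighbour of $y$ contains $\overset{\overset{2}{-}}{(3)}$. Installing the required cycle then costs two steps (move $p$ into a transposition of a neighbour, then back out with the new label), and feeding $d(x,y)\le 2+\diam\C(G',Y)$ into your recursion gives a bound growing like $2n$ rather than $n+1$. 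The paper sidesteps this entirely by bounding the distance from an arbitrary $x$ to the \emph{fixed} representative $w_1$, all of whose $1$-cycles carry label $0$: the reduction step only has to produce a neighbour of $x$ itself containing some $\overset{\overset{0}{-}}{(a)}$, which Lemma~\ref{2cycle}(iii) always permits in one step by trading the transposition of $x$ against two of its $1$-cycles. Your base case $n=5$ is also only sketched, whereas the paper needs, and exhibits, explicit paths of lengths $4$, $5$ and $6$ according to the position of the transposition; as written, your argument does not establish the stated bound.
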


\begin{proof}
	Let $x \in X$. By Theorem \ref{tbnconj}, $x$ is conjugate either to $w_1 = \overset{\overset{1}{-}}{(1 \, 2)}\overset{\overset{0}{-}}{(3)}\cdots \overset{\overset{0}{-}}{(n)}$ or to $w_2 = \overset{\overset{0}{+}}{(1 \, 2)}\overset{\overset{0}{-}}{(3)}\cdots \overset{\overset{0}{-}}{(n)}$. Suppose first that $x$ is conjugate to $w_1$. Then $f(x) \equiv 2 \mod{4}$. 
	
	If $k_e \leq 2$, then the graph is not connected even in the underlying $W(B_n)$, so  \graph\ must be disconnected. So assume $k_e \geq 3$, meaning $n \geq 5$. We will show that if the transposition of $x$ is $\overset{\overset{\lambda}{\ast}}{(1\; 2)}$ for some $\lambda$, then $d(x,a) \leq n+1$. Otherwise $d(x,a) \leq n$. 
	
	We proceed by induction on $n$. The first case to consider is $n=5$, where we have $x=  \overset{\overset{\lambda}{\ast}}{(a \, b)}\overset{\overset{2p}{-}}{(c)}\overset{\overset{2q}{-}}{(d)}\overset{\overset{2r}{-}}{(e)}$ for some integers $\lambda, p, q, r$. Since $f(x) \equiv 2 \mod 4$, we must have $\lambda + p + q + r \equiv 1 \mod{2}$. Conjugation by elements of the centraliser of $w_1$ (which includes the subgroup $\langle \overset{\overset{0}{+}}{(1 \, 2)}, \overset{\overset{0}{+}}{(3\, 4)}, \overset{\overset{0}{+}}{(4 \; 5)}, \overset{\overset{0}{-}}{(5)} \rangle$) does not affect $d(x,w_1)$. Hence we can assume without loss of generality that $x$ contains one of the transpositions $\overset{\overset{\lambda}{+}}{(1\, 5)}$, $\overset{\overset{\lambda}{+}}{(3\; 4)}$ or $\overset{\overset{\lambda}{\ast}}{(1 \, 2)}$. Suppose first that $x$ contains  $\overset{\overset{\lambda}{+}}{(1\, 5)}$. Then $x = \overset{\overset{\lambda}{+}}{(1 \, 5)}\overset{\overset{2p}{-}}{(2)}\overset{\overset{2q}{-}}{(3)}\overset{\overset{2r}{-}}{(4)}$ for some integers $p, q, r$ such that $\lambda + p + q + r \equiv 1 \mod{2}$. Define
	$x_1 = \overset{\overset{1-p-q-r}{-}}{(1 \, 5)}\overset{\overset{2p}{-}}{(2)}\overset{\overset{2q}{-}}{(3)}\overset{\overset{2r}{-}}{(4)}$, $x_2 = \overset{\overset{p+q}{-}}{(2 \, 3)}\overset{\overset{2(1-p-q)}{-}}{(1)}\overset{\overset{2r}{-}}{(4)}\overset{\overset{-2r}{-}}{(5)}$ and $x_3 = \overset{\overset{0}{-}}{(4 \, 5)}\overset{\overset{2(1-p-q)}{-}}{(1)}\;\;\;\overset{\overset{2(p+q)}{-}}{(2)}\overset{\overset{0}{-}}{(3)}$.
	Note that for each $i$ we have $f(x_i) = f(w_1) = 2$, and hence $x_i \in X$. Moreover, by Lemma \ref{2cycle}, $x$, $x_1$, $x_2$, $x_3$, $w_1$ is a path in \graph. Hence $d(x,w_1) \leq 4$. \medskip\\
	If $x$ contains $\overset{\overset{\lambda}{+}}{(3\; 4)}$ , then  $x = \overset{\overset{\lambda}{+}}{(3 \, 4)}\overset{\overset{2p}{-}}{(1)}\overset{\overset{2q}{-}}{(2)}\overset{\overset{2r}{-}}{(5)}$ where again $\lambda + p + q + r \equiv 1 \mod{2}$, and $x$ commutes with $x'  = \overset{\overset{p-r}{+}}{(1 \, 5)}\overset{\overset{2q}{-}}{(2)}\overset{\overset{2\lambda}{-}}{(3)}\overset{\overset{0}{-}}{(4)}$. Now $f(x') = 2(p-r+q+\lambda) \equiv 2(\lambda + p + q + r) = f(x) \mod{4}$. Thus $x' \in X$ and we have seen above that $d(x',w_1) \leq 4$. Consequently $d(x,w_1) \leq 5$.\medskip\\
	Finally, if $x = \overset{\overset{\lambda}{+/-}}{(1 \, 2)}\overset{\overset{2p}{-}}{(3)}\overset{\overset{2q}{-}}{(4)}\overset{\overset{2r}{-}}{(5)}$, then $x$ commutes with $x'' = \overset{\overset{p-q}{+}}{(3 \, 4)}\overset{\overset{2\lambda}{-}}{(1)}\overset{\overset{0}{-}}{(2)}\overset{\overset{2r}{-}}{(5)}$ and $f(x'') \equiv f(x) \mod{4}$, meaning $x'' \in X$. We have seen above that $d(x'', w_1) \leq 5$. Hence $d(x,w_1) \leq 6$. Therefore the inductive hypothesis holds for $n=5$.  
	
	We now assume $n\geq 6$ and proceed inductively. Suppose the transposition of $x$ contains some $a$ with $a > 2$. Then $x$ contains $\overset{\overset{\lambda}{\ast}}{(a \; b)}\overset{\overset{\mu}{-}}{(c)}\overset{\overset{\mu'}{-}}{(d)}$ for some integer $\lambda$ and even integers $\mu$ and $\mu'$.  Then, by Lemma \ref{2cycle}, $x$ commutes with the element $y$ containing  $\overset{\overset{(\mu + \mu')/2}{-}}{(c \;  d)}\overset{\overset{0}{-}}{(a)}\overset{\overset{2\lambda}{-}}{(b)}$, with all its other cycles the same as $x$. Now $f(y) - f(x) = 2|\lambda| + |\mu + \mu'| - (2|\lambda| + |\mu| + |\mu'|)$, which is congruent modulo 4 to $f(x)$ because $\mu$ and $\mu'$ are both even. Thus $y \in X$. Ignoring the cycle $\overset{\overset{0}{-}}{(a)}$ we can work within $G_{\{1,\ldots, a-1, a+1, \ldots, n\}}$, to see that inductively $d(y,w_1) \leq n-1$. Hence $d(x,w_1) \leq n$. If the transposition of $x$ is $\overset{\overset{\lambda}{\ast}}{(1\; 2)}$ then, again by Lemma \ref{2cycle}, $x$ certainly commutes with an element of $X$ which does not have this transposition. So $d(x,w_1) \leq n+1$ as required and $\diam \graph \leq n+1$.\medskip\\
	Finally, the case where $x$ is conjugate to $w_2$ follows from the $w_1$ case. This is because the conjugacy class $X$ of $w_1$ in $W(\tilde B_n)$ is mapped by any element $g$ of $W(\tilde C_n) \setminus W(\tilde B_n)$ to the class $Y$ of $w_2$ in $W(\tilde B_n)$. This map induces an isomorphism of their respective commuting involution graphs. Therefore the graphs have the same diameters.
\end{proof}

\begin{prop} \label{ko=l=0} Let $G = W(\tilde B_n)$. Suppose $X$ is a conjugacy class whose elements have labelled cycle type $(m, k_e, 0, 0)$ where both $m>1$ and $k_e \geq2$, then   \graph\ is connected with diameter at most $n-1$. \end{prop}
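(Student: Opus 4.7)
The proof will proceed by induction on $n$. Since $m\ge 2$ and $k_e\ge 2$, we have $n = 2m+k_e \ge 6$, so the base case is $n=6$ with $m=k_e=2$. By Theorem \ref{tbnconj}, each labelled cycle type $(m,k_e,0,0)$ determines two conjugacy classes in $\tbn$ depending on $f \mod{4}$; fix canonical representatives
\[a_0 = \prod_{i=1}^m (\overset{\overset{0}{+}}{2i\!-\!1\;2i}) \prod_{j=1}^{k_e} (\overset{\overset{0}{-}}{2m\!+\!j}), \qquad a_1 = (\overset{\overset{1}{+}}{1\;2})\prod_{i=2}^m (\overset{\overset{0}{+}}{2i\!-\!1\;2i}) \prod_{j=1}^{k_e} (\overset{\overset{0}{-}}{2m\!+\!j})\]
for the classes with $f\equiv 0$ and $f\equiv 2 \mod{4}$ respectively. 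Letting $a$ denote whichever of $a_0, a_1$ lies in $X$, the task is to prove $d(x,a) \le n-1$ for every $x \in X$.

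\textbf{Reduction when $k_e \ge 3$.} Given $x \in X$, the central step is to produce $y \in X$ commuting with $x$ such that $y$ contains the cycle $(\overset{\overset{0}{-}}{n})$. The construction of $y$ splits into cases that parallel the inductive step of Proposition \ref{m=1B}: if $x$ already contains $(\overset{\overset{0}{-}}{n})$ take $y=x$; if $x$ has a transposition involving coordinate $n$, Lemma \ref{2cycle}(iii) lets us exchange it, together with a suitable pair of negative $1$-cycles, for a configuration carrying $(\overset{\overset{0}{-}}{n})$; if $x$ has $(\overset{\overset{\mu}{-}}{n})$ with $\mu\neq 0$, a second negative $1$-cycle $(\overset{\overset{\mu'}{-}}{c})$ (available because $k_e\ge 2$) is used to shift the label via a short commuting chain through a transposition on $\{c,n\}$. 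In each sub-case labels are chosen so that the labelled cycle type and $f \mod{4}$ are both preserved, so $y \in X$. Restricting $y$ to $G_{\{1,\ldots,n-1\}} \cong W(\tilde B_{n-1})$ yields an element of labelled cycle type $(m, k_e-1, 0, 0)$ with unchanged $f \mod{4}$; since $k_e-1\ge 2$, the inductive hypothesis gives $d(y,a) \le (n-1)-1 = n-2$ in this subgroup, so $d(x,a)\le n-1$.

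\textbf{Reduction when $k_e = 2$.} Here the one-coordinate reduction lands in labelled cycle type $(m,1,0,0)$, which by Theorem \ref{bbdresults}(iii) gives a disconnected graph, so we must reduce by two coordinates at once. Instead we find $y\in X$ commuting with $x$ such that $y$ contains both $(\overset{\overset{0}{-}}{n-1})$ and $(\overset{\overset{0}{-}}{n})$, matching $a$ on both of its $1$-cycle positions simultaneously; this may take up to two steps. Restricting $y$ to $G_{\{1,\ldots,n-2\}} \cong W(\tilde B_{n-2})$, we obtain labelled cycle type $(m,0,0,0)$, and Proposition \ref{n=2m}(i) yields $d(y,a)\le 3$ inside this subgroup. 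Therefore $d(x,a)\le 2+3 = 5 \le n-1$, which also handles the base case $n=6$.

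\textbf{Main obstacle.} The principal difficulty will be the label arithmetic in constructing $y$. Lemma \ref{1cycle} forbids a single-step transition from $(\overset{\overset{\mu}{-}}{n})$ to $(\overset{\overset{0}{-}}{n})$ within $X$ when $\mu\neq 0$, so those sub-cases require a two-step chain through an intermediate carrying a transposition on coordinate $n$, with the arithmetic equalities of Lemma \ref{doubletrans} guaranteeing commutativity. Each candidate $y$ must simultaneously (i) commute with $x$, (ii) have the correct labelled cycle type, and (iii) satisfy $f(y)\equiv f(x)\mod{4}$, the last condition being the most sensitive to label choices. The most delicate bookkeeping arises in the $k_e=2$ case, where two $1$-cycle positions are straightened at once; in particular the base case $n=6$, $m=k_e=2$ is treated under that sub-case rather than by a separate enumeration.
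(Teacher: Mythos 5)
Your overall strategy is close in spirit to the paper's, but there is a genuine arithmetic gap in your reduction for $k_e \ge 3$. You claim to produce $y \in X$ \emph{commuting with} $x$ that contains $(\overset{\overset{0}{-}}{n})$ and then conclude $d(x,a) \le 1 + (n-2) = n-1$. In your third sub-case, however, where $x$ contains $(\overset{\overset{\mu}{-}}{n})$ with $\mu \ne 0$, no such neighbour exists: by Lemmas \ref{1cycle} and \ref{2cycle}, any element of $X$ commuting with $x$ either contains $(\overset{\overset{\mu}{-}}{n})$ itself or has $n$ in a transposition, so it cannot contain $(\overset{\overset{0}{-}}{n})$. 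Your own ``Main obstacle'' paragraph concedes that this sub-case needs a two-step chain, but you never reconcile that with the count in the reduction paragraph; with $d(x,y)=2$ the bound becomes $d(x,a) \le 2 + (n-2) = n$, which overshoots the target. The sub-case is unavoidable: for instance $x = (\overset{\overset{0}{+}}{1\,2})(\overset{\overset{0}{+}}{3\,4})(\overset{\overset{2}{-}}{5})(\overset{\overset{2}{-}}{6})(\overset{\overset{-4}{-}}{7})$ has every coordinate in $\{2m+1,\dots,n\}$ sitting in a $1$-cycle with nonzero label, so no relabelling by $C_G(a)$ rescues the choice of peeling off coordinate $n$. (A similar optimism appears in your $k_e=2$ case, where reaching an element carrying both $(\overset{\overset{0}{-}}{n-1})$ and $(\overset{\overset{0}{-}}{n})$ can take three steps rather than two, e.g.\ when one of $n-1,n$ lies in a transposition of $x$ and the other in a $1$-cycle with nonzero label; this survives for $n\ge 7$ but for $n=6$ one needs the sharper bound $d\le 2$ in $W(\tilde B_4)$, as the paper notes.)

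The paper sidesteps the obstruction by never trying to straighten the label of an existing $1$-cycle of $x$ in the coordinate being peeled off. It inducts on $k_e$ (base case $k_e=2$, via an explicit chain ending with Proposition \ref{n=2m}); for $k_e>2$ it peels off a coordinate $\alpha>2m$ lying in a \emph{transposition} of $x$ --- converting $\overset{\overset{\lambda}{\ast}}{(\alpha\;\beta)}$ together with two $1$-cycles of $x$ into $(\overset{\overset{0}{-}}{\alpha})(\overset{\overset{-2\lambda}{-}}{\beta})$ and a new transposition is a single commuting step by Lemma \ref{2cycle}(iii), and the label $0$ on the new $1$-cycle can be chosen freely --- and when no transposition of $x$ meets $\{2m+1,\dots,n\}$ it switches to matching a transposition $\overset{\overset{0}{\ast}}{(1\;2)}$ of $a$ and inducting on $m$ via Proposition \ref{m=1B}. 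To salvage your induction on $n$ you would need either to adopt this choice of peeled coordinate (handling separately the case where all transpositions of $x$ live in $\{1,\dots,2m\}$), or to strengthen the inductive hypothesis so that elements already containing $(\overset{\overset{0}{-}}{n})$ are within distance $n-2$ of $a$.
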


\begin{proof} There are two conjugacy classes for each labelled cycle type. Let $x = \prod_{i=1}^m \stackrel{\stackrel{\lambda_i}{*}}{(a_i\;b_i)}\prod_{j=1}^{n-2m} \stackrel{\stackrel{\mu_j}{-}}{(c_j)}$ where the $\mu_j$ are all even.
	Then $x$ is conjugate to exactly one of the following:
	\begin{align*} w_1 &= \stackrel{\stackrel{1}{-}}{(1\;2 )}\stackrel{\stackrel{0}{+}}{(3\;4)}\cdots \stackrel{\stackrel{0}{+}}{(2m-1\;2m )} \; \stackrel{\stackrel{0}{-}}{(2m+1 )}\cdots
	\stackrel{\stackrel{0}{-}}{(n)};\\ 
	w_2 &= \stackrel{\stackrel{0}{+}}{(1\;2)}\stackrel{\stackrel{0}{+}}{(3\;4)}\cdots \stackrel{\stackrel{0}{+}}{(2m-1\;2m)}\; \stackrel{\stackrel{0}{-}}{(2m+1 )}\cdots
	\stackrel{\stackrel{0}{-}}{(n)}. \end{align*}
	In particular, $x$ is conjugate to $w_2$ if and only if $f(x) \equiv 0 \mod{4}$ which is if and only if $2\sum_{i=1}^m \lambda_i + \sum_{j=1}^m \mu_j\equiv 0 \mod{4}$. \\
	
	By assumption, $m \geq 1$ and $k_e \geq 2$. We proceed  
	by induction on $k_e$ to show that $\diam\graph \leq n-1$. Suppose $k_e = 2$. Then $x$ is distance at most 2 from an element $y$ of $X$ which has the transposition $\overset{\overset{0}{+/-}}{(n-1 \;\; n)}$.  To see this, note that if both $n-1$ and $n$ appear in transpositions of $x$, or if both appear in 1-cycles of $x$, then Lemma~\ref{2cycle} or Lemma~\ref{doubletrans}, as appropriate, implies that $x$ commutes with some $x'$ in $X$ which contains a transposition of the form $\overset{\overset{\lambda}{-/+}}{(n-1\;\; n)}$ for some $\lambda$. If $n-1$ and $n$ appear in transpositions of $x$, then we note that for each pair of double transpositions that commute in Lemma~\ref{doubletrans}, the numbers above the first double transposition are $\lambda_1$ and $\lambda_2$, the numbers above the second pair are $\mu_1$ and $\mu_2$, and in every case $\lambda_1 + \lambda_2 \equiv \mu_1 + \mu_2 \mod{2}$, which means $x'$ is guaranteed to be conjugate to $X$. If $n-1$ and $n$ are in 1-cycles of $x$, then $x$ contains cycles of the form $\overset{\overset{\lambda_1}{+/-}}{(\alpha \; \beta)}\overset{\overset{\lambda_2}{+/-}}{(\gamma \; \delta)}\overset{\overset{\mu_1}{-}}{(n-1)}\overset{\overset{\mu_2}{-}}{(n)}$; so, writing $\mu = (\mu_1-\mu_2)/2$, we may choose $x'$ to be $x$ with those cycles replaced with $\overset{\overset{\mu}{+}}{(n-1 \;\; n)}\overset{\overset{\lambda_1 + \lambda_2 - \mu}{-/+}}{(\gamma \; \delta)}\overset{\overset{0}{-}}{(\alpha)}\overset{\overset{2\lambda_1}{-}}{(\beta)}$. In either case, $x'$ is an element of $X$ that commutes with a suitable $y$. The remaining case is when $x$ contains (for example) the 1-cycle $\overset{\overset{\mu}{-}}{(n)}$ and $n-1$ appears in a transposition $\overset{\overset{\sigma}{+/-}}{(\ep\;\; n-1)}$ for some $\ep$ less than $n-1$. Then $x$ commutes with $x''$ in $X$ containing the transpositions $\overset{\overset{\lambda}{-/+}}{(\ep\;\; n-1)}$ and $\overset{\overset{\lambda'}{-/+}}{(\ep'\;\; n)}$ for some $\lambda$, $\lambda'$ and $\ep'$, where we can choose $\lambda$ to ensure that $x'' \in X$. Lemma~\ref{doubletrans} now implies that $x'$ commutes with an appropriate $y$, in particular one containing the transpositions $\overset{\overset{\sigma \pm \lambda}{-}}{(\ep\; \ep')}$ and $\overset{\overset{0}{-}}{(n-1\;\; n)}$. Now $y$ in turn commutes with some $z$ in $X$ with the 1-cycles $\overset{\overset{0}{-}}{(n-1)}$ and $\overset{\overset{0}{-}}{(n)}$. If we ignore these cycles and work in $G_{n-2}$, then a quick check confirms that when $n-2 = 4$ we have $d(z,a) \leq 2$, and when $n-2 > 4$, Proposition \ref{n=2m} 
	tells us that $d(z, a) \leq 3$. Therefore $\diam\graph \leq n-1$. 
	
	Finally, suppose $m\geq 2$ and $k_e > 2$.   Suppose there is some transposition of $x$ containing an element $\alpha$ with $\alpha > 2m$. Then by Lemma \ref{2cycle}(iii) $x$ commutes with some $y \in X$ such that $y$ has the 1-cycle  $\overset{\overset{0}{-}}{(\alpha)}$. By induction $d(y,a) \leq n-2$. Hence $d(x,a) \leq n-1$. The final possibility is that the elements of the transpositions of $x$ are $\{1, 2, \ldots, 2m\}$.  Since $m > 1$ we can use Lemma~\ref{doubletrans} to show that $x$ commutes with some $y$ in $X$ containing the transposition $\overset{\overset{0}{\ast}}{(1\; 2)}$. Working in $G_{\{3, 4, \ldots, n\}}$ (using the case $m=1$ and induction on $m$) we see that $d(y,a) \leq n-2$. Hence $d(x,a) \leq n-1$, which completes the proof of Proposition~\ref{ko=l=0}.  
\end{proof}

We note the following.

\begin{thm}[Theorem 1.2 of \cite{amal}]
\label{cnresults}
Theorem \ref{bbdresults} holds for $\tcn$.
\end{thm}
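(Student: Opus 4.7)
The approach mirrors the treatment of $\tbn$ and $\tdn$ given earlier in this section, but is strictly simpler, since in $\tcn$ every labelled cycle type corresponds to a single conjugacy class by Theorem \ref{conjclass}. There is no need to track $f(x)$ or $\minus(x)$ modulo $4$, so the class-splitting case analysis that dominates the $\tbn$, $\barb$ and $\tdn$ arguments simply disappears, and the same structural ideas suffice.

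For the disconnectedness assertions (i)--(v), the plan is direct. In case (i) every element of $X$ is a product of negative 1-cycles on positions $\{1,\ldots,n\}$; by Lemma \ref{1cycle} two such 1-cycles at a common position commute only when their labels agree, and the cycles at distinct positions commute trivially. Hence distinct elements of $X$ cannot commute and $\graph$ has no edges at all (and $|X|>1$ whenever $k_e + k_o \geq 1$, as labels may vary freely). For (ii) and (iii) I would argue by the same rigidity: a unique 1-cycle of a given sign, or a unique fixed point, cannot be ``moved'' to a different position by any commuting element of $X$ without leaving the conjugacy class, so the graph splits along the position of that distinguished cycle. Cases (iv) and (v) are small and can be checked by direct enumeration of involutions in the relevant classes.

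For the connectedness bound $\diam \graph \leq n+2$, the plan is induction on $n$, showing that any $x \in X$ lies within distance $n+2$ of the canonical representative $a_{m,k_e,k_o,l}$ furnished by Theorem \ref{conjclass}. The central inductive move: given $x \in X$, use Lemmas \ref{2cycle} and \ref{doubletrans} to produce a $y \in X$ commuting with $x$ whose action on some chosen position $\alpha$ (or pair of positions, if $\alpha$ sits inside a transposition of $a_{m,k_e,k_o,l}$) already matches $a_{m,k_e,k_o,l}$. One then restricts to the rank-$(n-1)$ subgroup $G_{\{1,\ldots,n\}\setminus\{\alpha\}}$ and invokes the inductive hypothesis, spending at most one extra commuting step per unit drop in rank. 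The separate reductions for 1-cycle positions and transposition positions follow the pattern developed in Propositions \ref{m=1B} and \ref{ko=l=0}.

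The main obstacle, as always, will be the base of the induction. For small $n$ (notably $n \in \{4,5,6\}$) and for configurations with $m=1$ and few negative 1-cycles, there is not enough room to apply a clean reduction move without first making a preparatory commuting step to put $x$ into a more tractable shape; here one has to exhibit explicit short paths between arbitrary pairs of elements of $X$, much as in the $n=5$ analysis inside the proof of Proposition \ref{m=1B}. Once these base cases are settled, the inductive bound $n+2$ falls out and the theorem is established for $\tcn$ by the same combinatorial machinery used in Sections 2--4.
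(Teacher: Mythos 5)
The first thing to note is that the paper does not prove this statement at all: it is quoted as Theorem 1.2 of \cite{amal}, an earlier published paper devoted entirely to $\tcn$, and the present paper's ``proof'' is simply that citation. So your attempt cannot be compared against an argument in this paper; it has to stand on its own as a reconstruction of \cite{amal}.

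Judged on that basis, your outline points in the right direction but leaves genuine gaps exactly where the content lies. The disconnectedness cases (i)--(iii) are essentially fine: the rigidity you need is that an even-labelled negative $1$-cycle can only be absorbed into a transposition together with another even-labelled negative $1$-cycle, because Lemma \ref{2cycle}(iii) forces $\mu \pm \nu = 2\lambda$ and hence $\mu \equiv \nu \bmod 2$; cases (iv) and (v) are finite checks. The problem is the diameter bound. Asserting that one spends ``at most one extra commuting step per unit drop in rank'' and that the bound then ``falls out'' is precisely the claim that needs proof: a one-step-per-rank reduction from rank $n$ to a base of rank $r$ gives $n-r$ plus the base diameter, and whether the total lands at $n+2$ rather than something larger depends entirely on which positions can be aligned for free, which cost an extra step (compare Proposition \ref{m=1B}, where moving the transposition onto $\{1,2\}$ costs a step beyond the one-per-rank budget), and on how the two flavours of negative $1$-cycles and the fixed points interact when $k_e$, $k_o$ and $l$ are all small but nonzero --- a situation that does not arise in $\tbn$ or $\tdn$ and is peculiar to $\tcn$. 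Those case analyses, together with the $m=1$ classes where the graph is connected but the diameter genuinely approaches $n+2$, are the substance of \cite{amal}; your proposal names them as ``the main obstacle'' and then defers them, so as written it is a plan for a proof rather than a proof.
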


We next establish that the cases stated to be disconnected in Theorem \ref{bbdresults} are indeed disconnected.

\begin{prop}
\label{disconnect}
Let $G$ be one of $\tbn$, $\barb$ or $\tdn$. Let $X$ be a conjugacy class of involutions in $G$ whose elements have labelled cycle type $(m,k_e,k_o,l)$. Then $\graph$ is disconnected in each of the following cases.
\begin{trivlist}
	\item{(i)} $m=0$ and $l=0$;
	\item{(ii)} $m>0$, $l=0$ and either $k_e = 1$ or $k_o = 1$ (or both);
	\item{(iii)} $m>0$ and $\max\{k_e,k_o,l\} = 1$;
	\item{(iv)} $n=4$ and $m=1$;
	\item{(v)} $n=6$, $m=1$, $k_o = k_e = 2$.
\end{trivlist}
\end{prop}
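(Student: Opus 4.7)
The plan is to handle (i)--(v) separately, combining Lemma \ref{easy} (to inherit disconnectedness from the underlying finite Weyl groups via Theorem \ref{finite}) with direct affine invariant arguments drawn from Lemmas \ref{1cycle}, \ref{2cycle}, \ref{doubletrans}.

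Case (i) is immediate: when $m=l=0$, every element of $X$ is a product of negative $1$-cycles, and Lemma \ref{1cycle} forces two such elements to commute only when they agree at every position. Hence distinct elements of $X$ never commute and $\graph$ is edgeless.

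Cases (iii) and (iv) are handled uniformly by Lemma \ref{easy}. For (iii), the parity constraints defining $\tbn$ ($k_o$ even), $\barb$ ($k_e$ even) and $\tdn$ (both even) force the underlying finite cycle type $(m,k_e+k_o,l)$ to satisfy $\max\{k_e+k_o,l\}\leq 1$, so Theorem \ref{finite}(iii) gives finite disconnectedness which lifts to the affine graph. For (iv), a short enumeration shows that every admissible labelled type with $n=4, m=1$ projects to a finite type with $n=4, m=1, t\leq 2$, covered by Theorem \ref{finite}(iii) or (v). Case (ii) splits: when $k_o=0$ (in $\tbn$, with $k_e=1$) or $k_e=0$ (in $\barb$, with $k_o=1$), the underlying finite graph is disconnected by Theorem \ref{finite}(iii); for the remaining sub-cases ($k_o\geq 2$ in $\tbn$, or $k_e\geq 2$ in $\barb$), the finite graph is connected and I would exhibit an affine invariant. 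Specifically, if $x$ contains a unique even-labelled negative $1$-cycle $\overset{\overset{\mu}{-}}{(a)}$, then any $y\in X$ commuting with $x$ must also have $\overset{\overset{\mu}{-}}{(a)}$: by Lemmas \ref{1cycle}, \ref{2cycle} no other configuration at position $a$ in $y$ is compatible (a different $1$-cycle label fails Lemma \ref{1cycle}; a fixed point is excluded by $l=0$; a transposition would force $x$ to have a second even-labelled $1$-cycle, contradicting $k_e=1$). The position $a$ and the label $\mu$ are thus both commutation invariants; since they take infinitely many values on $X$ the graph decomposes. An analogous argument handles $k_o=1$ in $\barb$.

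The hardest case is (v). Here the underlying finite class $(1,4,0)$ with $n=6$ is connected (Theorem \ref{finite}(iv)), so Lemma \ref{easy} is unavailable. My plan is to attach to each $x\in X$ the unordered partition $\{T_x,E_x,O_x\}$ of $\{1,\dots,6\}$ into three pairs, where $T_x$ is the positions of the transposition, $E_x$ is the positions of the two even-labelled $1$-cycles and $O_x$ is the positions of the two odd-labelled $1$-cycles, and to prove this partition is preserved by commutation. The main obstacle will be the parity and overlap analysis: one must first rule out the possibility that the transposition of $y$ shares exactly one position with $T_x$ (which I would do by a direct computation of $\hat x\hat y$ versus $\hat y\hat x$ at that position), and then use Lemma \ref{2cycle}(iii) to show that a transposition of $y$ disjoint from $T_x$ must sit on two positions of $x$ whose $1$-cycle labels share a common parity, forcing $T_y\in\{T_x,E_x,O_x\}$. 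The cycle-type bookkeeping then forces the remaining $1$-cycles of $y$ to occupy the other two blocks with matching parities, so $\{T_y,E_y,O_y\}=\{T_x,E_x,O_x\}$. Since $\{1,\dots,6\}$ admits $15$ unordered partitions into three pairs and each is realised inside the single class $X$ (in each of $\tbn$, $\barb$, $\tdn$, by Theorems \ref{tbnconj}, \ref{barbconj} and \ref{tdnconj}), the graph $\graph$ has at least $15$ components.
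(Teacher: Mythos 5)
Your proposal is correct, but in two places it takes a genuinely different route from the paper. For cases (i), (ii) and (iv) you essentially reproduce the paper's argument: (i) is Lemma \ref{1cycle}, (iv) is inherited from the finite group via Lemma \ref{easy}, and for (ii) the paper uses exactly your invariant (an element with a unique negative $1$-cycle of a given label-parity can only commute with elements carrying that same labelled $1$-cycle, by Lemmas \ref{1cycle} and \ref{2cycle}) --- indeed the paper does not even bother splitting off the sub-case where the finite graph is already disconnected, since the invariant argument covers it uniformly. The divergence is in (iii) with $l=1$ and in (v). There the paper simply observes that the parity constraints ($k_o$ even in \tbn, $k_e$ even in \barb, both even in \tdn) together with Theorems \ref{tbnconj}, \ref{barbconj} and \ref{tdnconj} force $X = X^{\tcn}$, so that $\graph$ is \emph{literally the same graph} as $\C(\tcn,X)$, and disconnectedness follows at once from Theorem \ref{cnresults}; this is a two-line reduction. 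You instead handle (iii) by pushing down to the finite group (which works: your $\max\{k_e+k_o,l\}\le 1$ is in fact $=1$ since the max of the labelled type is $1$, so Theorem \ref{finite}(iii) applies), and for (v) you give a self-contained argument via the commutation-invariant unordered partition $\{T_x,E_x,O_x\}$ of $\{1,\dots,6\}$ into three pairs. That argument is sound --- the only-one-transposition constraint plus Lemmas \ref{1cycle} and \ref{2cycle}(iii) do force the partition to be preserved, and all $15$ partitions are realised since the class does not split --- and it has the merit of being independent of the imported $\tcn$ theorem (in effect you reprove the $\tcn$ case (v)); the cost is that it is considerably longer than the paper's one-line citation, and you missed that the non-splitting observation makes Lemma \ref{easy} unnecessary rather than merely ``unavailable''.
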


\begin{proof} 
\begin{trivlist}
	\item{(i)} If $m=0$ and $l=0$, then elements of $X$ consist entirely of negative 1-cycles. So the graph is completely disconnected by Lemma \ref{1cycle}.
	\item{(ii)} Suppose $m>0$, $l=0$ and either $k_e = 1$ or $k_o = 1$. We cannot have $k_e = 1$ and $k_o = 1$ because involutions of $\tbn$ and $\tdn$ require $k_o$ to be even, and involutions of $\barb$ and $\tdn$ require $k_e$ to be even. Since by Lemma \ref{2cycle} 1-cycles can only commute in pairs with 2-cycles, any $x$ in $X$ having the 1-cycle $(\overset{\overset{\lambda}{-}}{i})$ can only commute with elements having the same 1-cycle. So $\graph$ is disconnected.
	\item{(iii)} Suppose $m>0$ and $\max\{k_e,k_o,l\} = 1$. If $l=0$, then $\graph$ is disconnected by (ii).  If $l > 0$ then $X = X^{\tcn}$ by Theorems \ref{tbnconj}, \ref{barbconj} and \ref{tdnconj}, and therefore  $\graph$ is disconnected by Theorem \ref{cnresults}. 
	\item{(iv)} If $n=4$ and $m=1$ then $\graph$ is disconnected even in the underlying $W(B_n)$ or $W(D_n)$, so $\graph$ is disconnected.
	\item{(v)} If $n=6$, $m=1$, $k_o = k_e = 2$, then $X = X^{\tcn}$ by Theorems \ref{tbnconj}, \ref{barbconj} and \ref{tdnconj}, and therefore  $\graph$ is disconnected by Theorem \ref{cnresults}.\qedhere
\end{trivlist}
\end{proof}

\paragraph{Proof of Theorem \ref{bbdresults}}

By Theorem \ref{cnresults}, it only remains to prove Theorem \ref{bbdresults} for $\tbn$ and $\tdn$. First let $G = \tbn$ and let $X$ be a conjugacy class of involutions with labelled cycle type $(m,k_e,k_o,l)$. In this group we must have $k_o$ even. If $l>0$ or $k_o > 0$, then $X = X^{\tcn}$, and so the results from $\tcn$ on connectedness and diameter apply; in particular all cases not given in Proposition \ref{disconnect} are connected with the diameter is at most $n+2$. It remains to deal with the case where $l=0$ and $k_o = 0$. Here, if $k_e = 0$, then $\diam \graph \leq 3$ by Lemma \ref{n=2m}. If $k_e = 1$, then $\graph$ is disconnected by Proposition \ref{disconnect}. So suppose $k_e \geq 2$. If $m=0$ then $\graph$ is completely disconnected. If $m=1$, then by Proposition \ref{m=1B} either $k_e \leq 2$ and $\graph$ is disconnected, or $k_e > 2$ and $\diam \graph \leq n+1$. If $m > 1$, then by Proposition \ref{ko=l=0}, we have that $\graph$ is connected with diameter at most $n-1$. Therefore Theorem \ref{bbdresults} holds for $\tbn$. Since $\omega$ maps classes of $\tbn$ to classes of $\barb$, it follows immediately that Theorem \ref{bbdresults} holds also for $\barb$. Now suppose $G = \tdn$. By Theorem \ref{tdnconj}, if any of $k_o, k_e$ or $l$ are nonzero, then either $X = X^{\tbn}$ or $X = X^{\barb}$ or both. Therefore the results from $\tbn$ and $\barb$ apply. The only remaining case is when $X$ has labelled cycle type $(m,0,0,0)$. Here, $\graph$ is connected with diameter at most 4, by Proposition \ref{n=2m}. Therefore Theorem \ref{bbdresults} also holds for $\tdn$.\qed

\section{Conclusion}

A natural question is whether the $n+2$ bound continues to hold for the exceptional affine groups. There are some preliminary results in \cite{amalthesis} and \cite{noteaffine}. In particular, complete results are known for types $\tilde F_4$ and $\tilde G_2$, along with results for some classes in $\tilde E_6$, $\tilde E_7$ and $\tilde E_8$. Full information for types $\tilde E_6$, $\tilde E_7$ and $\tilde E_8$ requires more work and will be the subject of a forthcoming paper by the current authors.

 \end{document}